\theoremstyle{plain}
\newtheorem{theorem}{Theorem}[section]
\newtheorem{lemma}[theorem]{Lemma}
\newtheorem{proposition}[theorem]{Proposition}
\newtheorem{corollary}[theorem]{Corollary}
\newtheorem{conj}[theorem]{Conjecture}
\theoremstyle{definition}
\newtheorem{definition}[theorem]{Definition}
\newtheorem{remark}[theorem]{Remark}
\newtheorem{construction}[theorem]{Construction}
\newtheorem{notation}[theorem]{Notation}
\newtheorem{example}[theorem]{Example}
\title{Tame torsion and the tame inverse Galois problem}
\author{Matthew Bisatt and Tim Dokchitser}
\address{Fry Building, University of Bristol, Woodland Road, Bristol, BS8 1UG, UK}
\email{matthew.bisatt@bristol.ac.uk}
\email{tim.dokchitser@bristol.ac.uk}
\subjclass[2010]{11G30, 14G22}
\newenvironment{psmallmatrix}{\bigl(\begin{smallmatrix}}{\end{smallmatrix}\bigr)}
\date{\today}
\begin{document}
\global\long\def\OK{\mathcal{O}_K}
\global\long\def\CK{\mathbb{C}_K}
\global\long\def\zp{\mathbb{Z}_p}
\global\long\def\Z{\mathbb{Z}}
\global\long\def\pp{\mathbb{P}}
\global\long\def\Q{\mathbb{Q}}
\global\long\def\fp{\mathbb{F}_p}
\global\long\def\Bg{B_{\gamma}}
\global\long\def\zg{z_{\gamma}}
\global\long\def\diam{\operatorname{diam}}
\global\long\def\id{\operatorname{id}}
\global\long\def\PGL{\operatorname{PGL}}
\global\long\def\End{\operatorname{End}}
\global\long\def\Gal{\operatorname{Gal}}
\global\long\def\GSp{\operatorname{GSp}}
\global\long\def\Sp{\operatorname{Sp}}
\global\long\def\Frob{\operatorname{Frob}}

\begin{abstract}
	Fix a positive integer $g$ and a squarefree integer $m$. We prove the existence of a genus $g$ curve $C/\Q$ such that the mod $m$ representation of its Jacobian is tame.
	The method is to analyse the period matrices of hyperelliptic Mumford curves, which could be of independent
	interest. As an application, we study the tame version of the inverse Galois problem for symplectic matrix groups over finite fields.
\end{abstract}

\llap{.\hskip 10cm}\vskip -3mm
\maketitle

\section{Introduction}

We say that a number field $F$ is \emph{tame} if $F/\Q$ is tamely ramified
at every finite prime of $F$,~and \emph{wild} otherwise.
The first result of this paper concerns the problem of finding, for fixed $g$ and $m$, 
a (non-singular projective) curve $C$ of genus~$g$ whose Jacobian $J_C$ has tame 
$m$-torsion field $\Q(J_C[m])$.

\begin{theorem}[=\ref{tamecor}]
\label{tamethm}
	For every $g\geqslant 1$ and squarefree $m\geqslant 1$, there is a curve $C/\Q$ 
	of genus $g$ such that $\Q(J_C[m])$ is tame.
\end{theorem}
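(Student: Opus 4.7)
The plan is to build $C/\Q$ hyperelliptically by specifying Weierstrass points that reproduce, at each bad prime, a hyperelliptic Mumford curve whose period matrix is under tight control. First I reduce to a local problem: the ramification locus of $\Q(J_C[m])/\Q$ is contained in the finite set $S$ of primes dividing $m$ together with primes of bad reduction of $C$, so it suffices to arrange that $\Q_p(J_C[m])/\Q_p$ is tamely ramified for every $p \in S$.

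At each $p \in S$, I would take $C_p/\Q_p$ to be a hyperelliptic Mumford curve of genus $g$: rigid-analytically $C_p$ is the quotient of a subdomain of $\pp^1$ by a rank $g$ Schottky group $\Gamma \leq \PGL_2(\Q_p)$, its Jacobian has totally split purely toric reduction, and by Raynaud uniformisation there is an exact sequence
\[
0 \longrightarrow \Lambda \longrightarrow \mathbb{G}_m^g \longrightarrow J_{C_p}^{\mathrm{an}} \longrightarrow 0
\]
with $\Lambda$ generated by the columns of a period matrix $Q = (q_{ij}) \in M_g(\Q_p^*)$. Passing to $m$-torsion gives
\[
\Q_p(J_{C_p}[m]) = \Q_p\bigl(\mu_m,\, q_{ij}^{1/m} : 1 \leq i,j \leq g\bigr).
\]
Since $m$ is squarefree, $\Q_p(\mu_m)/\Q_p$ is tame, so tameness of the whole extension reduces to a condition on the $q_{ij}$: for $p \nmid m$ the Kummer extension is automatically tame, while for $p \mid m$ one needs each $q_{ij}$ to be a $p$-th power in $\Q_p(\mu_m)^*$, since any nontrivial degree-$p$ Kummer extension in residue characteristic $p$ is wildly ramified.

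The main obstacle will be showing that the hyperelliptic Mumford period matrix can be forced into this form by a suitable configuration of Weierstrass points. The entries $q_{ij}$ are explicit rational functions of the Weierstrass points, determined by the Schottky-group dynamics, so the freedom to tune them independently is limited; this analysis of hyperelliptic Mumford periods is precisely the technical heart advertised in the abstract. Once the local constructions are in place, globalisation is routine: the moduli of genus $g$ hyperelliptic curves is parametrised by unordered $(2g+2)$-tuples of points in $\pp^1$, so by strong approximation I choose Weierstrass points in $\Q$ that are $p$-adically close to those of $C_p$ for all $p \in S$ and arrange good reduction at the remaining primes. A Krasner-type continuity argument ensures that for sufficiently close approximation $C \otimes_\Q \Q_p \cong C_p$ for each $p \in S$, whence $\Q(J_C[m])$ is tame at every prime.
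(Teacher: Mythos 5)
Your reduction to a local problem and the Tate-curve/Mumford-curve philosophy match the paper's strategy, but the proposal as written has several genuine gaps, and its central step is left open.

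First, the sentence ``a Krasner-type continuity argument ensures that for sufficiently close approximation $C \otimes_\Q \Q_p \cong C_p$'' is false: hyperelliptic curves with $p$-adically close Weierstrass points are typically non-isomorphic over $\Q_p$. What one actually gets, and what the paper cites (Kisin's theorem on local constancy of $\ell$-adic families, Theorem~\ref{kisin}), is the weaker statement that $J_{C}[m]\cong J_{C_p}[m]$ as $G_{\Q_p}$-modules once the coefficients are congruent to sufficiently high $p$-adic precision. That is enough, but the claim must be stated at the level of Galois modules, not of curves.

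Second, your globalisation step assumes you can ``arrange good reduction at the remaining primes,'' but after imposing congruence conditions at the finitely many primes in $S$ the resulting curve $C/\Q$ will inevitably acquire bad reduction at large primes not in $S$. You therefore need a general input guaranteeing tameness there: the paper invokes Serre--Tate (\cite[p.497]{ST68}) for $\ell > 2g+1$, $\ell \neq p$, and ensures semistability (via the Mumford/good-reduction models) at the small primes $\ell \leqslant 2g+1$ so that Grothendieck's description of inertia applies. Without this, the claim that the ramification locus of $\Q(J_C[m])$ is ``contained in $S$'' does not suffice: you can control $S$ only for the Weierstrass-point configuration, not for the final glued curve.

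Third, and most substantially, the step you label ``the main obstacle'' --- producing a configuration of $2g+2$ Weierstrass points in $\pp^1(\Q_p)$ whose Whittaker period matrix $Q=(q_{ij})$ consists of $p$-th powers --- is indeed the heart of the matter, and you leave it entirely unresolved. The paper's \S 4--5 do the work: they show (Theorem~\ref{redid}, Lemma~\ref{alphaid}) that $Q_{ij}$ is congruent modulo $1+\pi^{em}\OK$ to the explicit quantity $\bigl(\tfrac{c_i-c_j-r_i-r_j}{2-c_i-c_j+r_i-r_j}\bigr)^2$ depending only on the pair $(i,j)$, and then choose the centres $c_i$ and radii $r_i$ as explicit powers of $\pi$ (Theorem~\ref{mainthm}, Theorem~\ref{tamepcons}) so that both this leading term and the correction factor are $p$-th powers (via Hensel's lemma, Lemma~\ref{power}). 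It is not at all obvious a priori that the limited freedom in the $q_{ij}$ can be exploited this way, so simply asserting the plan is not a proof.

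Finally, the Whittaker-group construction the paper relies on (Construction~\ref{conZ}) uses centres $c_i=\tfrac{a_i+b_i}{2}$ and the involution $s_\infty$, both of which break down when $p=2$. The paper treats $p=2$ separately (Proposition~\ref{prop2}) with an explicit family $y^2+h(x)y=-N^2$ having totally split $2$-torsion; your proposal does not address this case at all.
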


If $m$ is not squarefree, then $\Q(J_C[m])$ is wild, as it contains
$\Q(\zeta_m)$ (by the Weil pairing), which is wild above primes $p$ for which $p^2|m$. In that sense the result is the best possible.

Our strategy will be to reduce to the case where $m=p$ is prime and show that it suffices to construct a curve whose $p$-torsion of the Jacobian is tamely ramified at $p$, which we then do with Mumford curves. To illustrate our Mumford curve approach to this problem, we explain the idea in the elliptic curve setting in the following example.

\begin{example}
\label{ECexample}
Let $E/\Q_p$ be an elliptic curve with split multiplicative reduction. Then $E$ is isomorphic to a Tate curve and 
$E(\overline{\Q}_p) \cong {\overline{\Q}_p^{\times}}/{q^{\Z}}$ as $\Gal(\overline{\Q}_p/\Q_p)$-modules, for some $q \in p\Z_p$. Moreover any such $q$ gives rise to a Tate curve. 
In particular, $\Q_p(E[p])=\Q_p(\zeta_p,q^{1/p})$, and so, whenever $q$ is a $p$-th power (say $q=p^p$),
the extension $\Q_p(E[p])/\Q_p$ is tamely ramified.
\end{example}

\smallskip

For our second result, recall that the classical inverse Galois problem asks, given a finite group $G$, 
if there is a Galois extension $F/\Q$ such that $\Gal(F/\Q) \cong G$? This is open in general, but known for certain classes of groups including soluble groups and $G=S_n, A_n, \GSp_{2g}(\fp)$.
Birch \cite[p.35]{Bir94} further asked whether $F$ can also be taken to be tame? This is known as the tame inverse Galois problem.

We address this problem for $G=\GSp_{2g}(\fp)$, $p$ odd.
It is known when $g\!=\!1$ (all $p$) and~$g\!=\!2$ ($p \geqslant 5$) thanks to the work of 
Arias-de-Reyna--Vila \cite[Theorem 1.2]{AV09}, \cite[Theorem 5.3]{AV11}.

\begin{theorem}[=\ref{IGPcor}]
\label{tamethm2}
	 Fix a positive integer $g$ and an odd prime $p$, such that there is a Goldbach triple for $2g+2$ not containing $p$. 
	 There is a curve $C/\Q$ of genus $g$ such that $\Q(J_C[p])$ is tame, and 
	 $\Gal(\Q(J_C[p])/\Q) \cong \GSp_{2g}(\fp)$.
\end{theorem}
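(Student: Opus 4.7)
The plan is to promote Theorem~\ref{tamethm} to Theorem~\ref{tamethm2} by combining its tameness-at-$p$ output with auxiliary local conditions at primes different from $p$, so as to force the mod-$p$ Galois image on $J_C$ to exhaust $\GSp_{2g}(\fp)$. The Weil pairing already identifies the image as a subgroup of $\GSp_{2g}(\fp)$ whose similitude character is the (surjective, for odd $p$) mod-$p$ cyclotomic character, so it suffices to produce enough elements in the image to generate $\Sp_{2g}(\fp)$.

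Writing $C$ as a hyperelliptic Mumford curve $y^2 = f(x)$ with $\deg f = 2g+2$, the construction underlying Theorem~\ref{tamethm} is a $p$-adic open condition on the branch locus of $f$. I would retain it and impose two further conditions at primes different from $p$. First, at some prime $\ell_0$ of good reduction, arrange that $f$ factors modulo $\ell_0$ into irreducibles of degrees $q_1, q_2, q_3$, where $2g+2 = q_1 + q_2 + q_3$ is the given Goldbach triple of odd primes not containing $p$; this makes the Frobenius at $\ell_0$ act on the Weierstrass points, and hence on $J_C[p]$, as an element of $\GSp_{2g}(\fp)$ with a very specific cycle type. Second, at a further prime $\ell_1 \neq p$, arrange for exactly two branch points of $f$ to collide, so that by Picard--Lefschetz the inertia at $\ell_1$ acts on $J_C[p]$ as a symplectic transvection.

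A transvection together with an element of the above ``Goldbach'' cycle type are known to generate $\Sp_{2g}(\fp)$, by the classification of irreducible subgroups of $\GSp_{2g}(\fp)$ containing a transvection that is used in this context by Arias-de-Reyna--Vila and by Arias-de-Reyna--Dieulefait--Wiese; primality of the $q_i$ and the assumption $p \notin \{q_1, q_2, q_3\}$ are precisely what this classification requires to rule out the various maximal-subgroup obstructions. Combined with the surjectivity of the similitude character, this yields $\Gal(\Q(J_C[p])/\Q) \cong \GSp_{2g}(\fp)$, while tameness is guaranteed by Theorem~\ref{tamethm}.

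The main technical obstacle, and the step where care is needed, is the simultaneous realisation of all these local demands by a single squarefree polynomial $f \in \Q[x]$ of degree $2g+2$: the $p$-adic Mumford shape for tameness, the mod-$\ell_0$ factorisation pattern, and the $\ell_1$-adic collision. Each is an open condition on the appropriate completion of the space of monic degree-$(2g+2)$ polynomials, and the primes involved are pairwise distinct, so a standard weak-approximation and Krasner argument produces $f \in \Q[x]$ satisfying all three simultaneously; the resulting curve $C : y^2 = f(x)$ then establishes Theorem~\ref{tamethm2}.
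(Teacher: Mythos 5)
Your proposal misreads the definition of Goldbach triple and, more importantly, relies on a step that does not work for odd $p$.

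First, the definitional point: in Conjecture~\ref{Gold} a Goldbach triple for $n=2g+2$ consists of primes $q_1,q_2,q_3$ with $q_1+q_2=n$ and $q_3<n$; it is \emph{not} a partition $q_1+q_2+q_3=2g+2$. This matters because the paper's construction never asks for $f$ to factor mod one prime into three irreducibles summing to $2g+2$; rather it imposes an Eisenstein factorisation of shape $\{q_1,q_2\}$ (with $q_1+q_2=2g+2$) at one auxiliary prime $p_2$ and an Eisenstein factorisation of shape $\{q_3\}$ at another auxiliary prime $p_3$, and combines the two via \cite[Lemma 3.2]{AD20}.

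Second, and this is the genuine gap, your irreducibility step does not do what you claim. You propose to choose a prime $\ell_0$ of \emph{good} reduction where $f$ factors into irreducibles of degrees $q_1,q_2,q_3$, and then assert that ``the Frobenius at $\ell_0$ acts on the Weierstrass points, and hence on $J_C[p]$, as an element of $\GSp_{2g}(\fp)$ with a very specific cycle type.'' But the action of Frobenius on the Weierstrass points only determines its action on $J_C[2]$, not on $J_C[p]$ for odd $p$. For odd $p$ the action of $\Frob_{\ell_0}$ on $J_C[p]$ is governed by the (mod $p$ reduction of the) characteristic polynomial of Frobenius, i.e.\ by point counts of $C$ over $\mathbb{F}_{\ell_0^k}$, and this is not read off from the factorisation type of $f$ mod $\ell_0$. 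So you have not produced an element of controlled order in the image inside $\GSp_{2g}(\fp)$, and hence no irreducibility. The paper instead controls \emph{inertia}, not Frobenius: the Eisenstein factorisations at $p_2,p_3$ are bad-reduction conditions whose inertia action (via the choice of $p_2,p_3$ as primitive roots modulo the relevant $q_i$) forces irreducibility, while semistability at every $\ell\le 2g+1$ together with totally toric reduction at $p$ supplies primitivity. With irreducibility, primitivity and a transvection in hand, \cite[Theorem 5.3]{AD20} gives the full group $\GSp_{2g}(\fp)$; your proposal has no substitute for the primitivity input. The transvection via a degree-$2$ collision (type $1\!-\!\{2\}$) and the gluing by weak approximation / Kisin's theorem are fine, but the core irreducibility step needs to be reworked along the lines of inertia at Eisenstein primes (as in Theorem~\ref{tamemodp}), or else replaced by an honest Frobenius condition on the characteristic polynomial of Frobenius mod $p$ as in Lemma~\ref{smallIGP}, which is not implied by a factorisation condition on $f$.
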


\noindent
See Conjecture \ref{Gold} for the definition of a Goldbach triple. This is a (slightly) strengthened version 
of the Goldbach conjecture that predicts that such triples always exist. 
On the numerical side, we show that, consequently, $\GSp_{2g}(\fp)$ is tamely realisable as a Galois group over $\Q$ 
for $g \leqslant 10^7$ and all $p>2$; see Lemma \ref{smallIGP} and the discussion afterwards.

\medskip
\noindent 
\textbf{Layout.} 
In \S 2--5 we address the tame torsion question (Theorem \ref{tamethm}). 
Specifically, in \S 2 we reduce the tame torsion question to odd primes $p$ and show that it suffices to construct a curve whose $p$-torsion is tamely ramified at $p$. We then focus on Mumford curves, which are the rigid space generalisation of the Tate curve we used in Example \ref{ECexample}. In \S 3, we review hyperelliptic Mumford curves and gather some basic results. We then compute an approximation to the period matrix in \S 4 and construct a suitable Mumford curve in \S 5.
In \S 6 we give the application to the inverse Galois problem (Theorem \ref{tamethm2}).

\begin{remark}
Ensuring that the mod $p$ representation is tamely ramified at $p$ may also be done via imposing restrictions on the endomorphism algebra instead; for details, see \cite{Bis20}. Moreover the author realises $\GSp_{2g}(\fp)$ as a Galois group over $\Q$ for all $g$ and odd primes $p$ via a non-constructive density argument \cite[Theorem 1.3]{Bis20}.
\end{remark}

\begin{notation}
\label{mainnot}
Throughout the paper, we denote
\noindent\par\noindent
\begin{tabular}{@{\qquad}llllll}
$G_F$       & $=\Gal(\overline{F}/F)$, the absolute Galois group of a field F\cr
$C$         & (hyperelliptic) non-singular projective curve $C$ of genus $g\geqslant 1$\cr
$J_C$       & Jacobian of $C$\cr
$\zeta_m$   & primitive $m^{\rm th}$ root of unity\\[3pt]
\noalign{\noindent In \S3--5, we write}
$K$         & finite extension of $\Q_p$ ($p$ odd)\vphantom{$\int^X$}\cr
$\OK$, $\pi$, $q_K$, $e$  & ring of integers of $K$, uniformiser, size of residue field, ramification degree\cr
$|\cdot|$   & absolute value on $K$, normalised so that $|\pi|=q_K^{-1}$\cr
$\overline{K}, \CK$       & an algebraic closure of $K$ and its completion\cr
$\Gamma$    & Schottky group, see Definition \ref{defschottky}\cr
$s_i, a_i, b_i, c_i, r_i$ & see Construction \ref{conZ}\cr
\end{tabular}
\end{notation}

\section{Reduction to the prime case and $\ell=p$}
First note that for squarefree $m$, the field $\Q(J_C[m])$ is the compositum of $\Q(J_C[p_j])$ for prime divisors $p_j|m$,
so it suffices to prove Theorem \ref{tamethm} when $m=p$ is prime.
In this section, we will reduce the question further to only needing to study the ramification of $\Q(J_C[p])/\Q$ 
at $p$ via a result of Kisin of local constancy of Galois representations in $\ell$-adic families, 
and deal with $p=2$.

\begin{lemma}
\label{tame}
	Let $m=p_1p_2\cdots p_n$, with $p_j$ distinct primes. Let $C/\Q$ be a curve of genus $g$ such that
	\begin{enumerate}
		\item $C$ has semistable reduction at all primes $\ell \leqslant 2g+1$ and $\ell|m$;
		\item $\Q_{p_j}(J_C[p_j]) \cong \Q_{p_j}(\zeta_{p_j})$ for $1 \leqslant j \leqslant n$.
	\end{enumerate}
	Then $\Q(J_C[m])$ is tame.
\end{lemma}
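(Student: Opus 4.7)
The plan is to reduce to a single prime at a time and then verify tameness at each rational prime $\ell$.

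First, since $m$ is squarefree, the Chinese remainder theorem gives $J_C[m] \cong \bigoplus_{j=1}^n J_C[p_j]$ as $G_\Q$-modules, so $\Q(J_C[m])$ is the compositum $\prod_j \Q(J_C[p_j])$. Since the compositum of tame extensions is tame at every prime, it suffices to prove $\Q(J_C[p])/\Q$ is tame for each single $p=p_j\mid m$. Fix such a $p$ and consider, for each rational prime $\ell$, the restriction of the mod $p$ representation to the decomposition group at $\ell$.

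For $\ell=p$, hypothesis (ii) directly gives $\Q_p(J_C[p])=\Q_p(\zeta_p)$, which is totally tamely ramified of degree $p-1$. For $\ell\neq p$ satisfying $\ell\leqslant 2g+1$ or $\ell\mid m$, hypothesis (i) says $C$ (and therefore $J_C$) is semistable at $\ell$. Grothendieck's semistable reduction criterion then implies that the inertia $I_\ell$ acts unipotently on $T_p J_C$, hence its image on $J_C[p]$ consists of unipotent matrices in $\GL_{2g}(\fp)$ and is therefore a finite $p$-group. The wild inertia $P_\ell$ is pro-$\ell$, and with $\ell\neq p$ its image in such a $p$-group must be trivial, giving tameness at $\ell$.

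The remaining case, and the main obstacle, is $\ell\neq p$ with $\ell>2g+1$ and $\ell\nmid m$: here (i) provides nothing and $C$ may have wild bad reduction at $\ell$ a priori. My plan is to appeal to the classical fact that once the residue characteristic $\ell$ exceeds $2g+1$, the abelian variety $J_C$ acquires semistable reduction over some tamely ramified extension $L/\Q_\ell$ (a consequence of the Serre--Tate criterion applied to $L=\Q_\ell(J_C[n])$ for a suitable $n$ coprime to $\ell$, together with a bound ensuring $[L:\Q_\ell]$ is coprime to $\ell$). Because $L/\Q_\ell$ is tame, $P_{\Q_\ell}=P_L$, and then the semistability argument of the previous paragraph applied over $L$ shows $P_{\Q_\ell}$ acts trivially on $J_C[p]$. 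Combining the three cases yields tameness at every prime.
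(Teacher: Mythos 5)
Your proof is correct and follows the same path as the paper: reduce via the compositum to a single prime $p$, then handle $\ell=p$ by hypothesis (ii), small $\ell\neq p$ by semistability together with Grothendieck's unipotence criterion, and $\ell>2g+1$ by the Serre--Tate result on potential semistable reduction over a tame extension. You spell out the intermediate reasoning (unipotent image is a $p$-group, wild inertia is pro-$\ell$, descent of wild inertia through a tame extension) that the paper delegates to the cited references, and your case split routes $\ell\mid m$ with $\ell>2g+1$ through the Grothendieck argument rather than Serre--Tate, but the underlying structure and ingredients are identical.
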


\begin{proof}
	Note $\Q(J_C[m])$ is the compositum of the fields $\Q(J_C[p_j])$ so it suffices to prove that these are all tame. Fix a prime $p=p_j$; we have to show that $\Q(J_C[p])/\Q$ is tamely ramified at $\ell$ for all primes $\ell$; note that by condition (ii), we may assume that $\ell \neq p$.
	
	If $\ell>2g+1$, then a result of Serre--Tate \cite[p.497]{ST68} tells us that the extension is tamely ramified at $\ell$. On the other hand, if $\ell \leqslant 2g+1$, then this follows from Grothendieck's characterisation of inertia on semistable abelian varieties \cite[Proposition 3.5]{Gro72}; see also \cite[Theorem 2.2]{AV11} for a direct proof of this.
\end{proof}

\begin{theorem}
\label{kisin}
Let $\ell$ be a prime. Let $C_f: y^2=f(x)$ be a hyperelliptic curve, with $f\in\Z_{\ell}[x]$ squarefree. For every $m\geqslant 1$, there exists $N\geqslant 1$ such that if $\tilde f\equiv f\mod{\ell^N}$ and $\deg(f)=\deg(\tilde f)$, then $C_{\tilde f}: y^2=\tilde f(x)$ is a hyperelliptic curve with $$J_{C_{\tilde f}}[m] \cong J_{C_f}[m]$$ as $G_{\Q_{\ell}}$-modules.
\end{theorem}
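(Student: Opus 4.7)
The plan is to place $f$ and $\tilde f$ in a common $\ell$-adic family of smooth hyperelliptic curves and then invoke Kisin's local constancy theorem for $\ell$-adic Galois representations attached to smooth proper families.

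Write $d=\deg f$ and let $U\subset\mathbb{A}^{d+1}_{\Q_\ell}$ be the open subscheme of degree-$d$ polynomial coefficients on which the universal discriminant does not vanish. Over $U$ there is a smooth projective relative hyperelliptic curve $\mathcal{C}\to U$ with fibre $\mathcal{C}_g=C_g$ at $g\in U(\Q_\ell)$; its relative Jacobian $\mathcal{J}\to U$ is an abelian scheme, and $\mathcal{J}[m]$ is a finite flat group scheme over $U$. Since $f$ is squarefree in $\Z_\ell[x]$, it corresponds to a point of $U(\Q_\ell)$, and by continuity of the discriminant every $\tilde f$ of the same degree sufficiently close to $f$ in the $\ell$-adic topology also lies in $U(\Q_\ell)$ and hence defines a hyperelliptic curve.

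By Kisin's local constancy theorem for $\ell$-adic families of Galois representations, the $G_{\Q_\ell}$-isomorphism class of the fibre $\mathcal{J}[m]_g$ is locally constant in $g\in U(\Q_\ell)$ with respect to the $\ell$-adic topology. Applying this at $g=f$ yields an integer $N=N(f,m)$ such that every $\tilde f\equiv f\pmod{\ell^N}$ of degree $d$ satisfies $J_{C_{\tilde f}}[m]\cong J_{C_f}[m]$ as $G_{\Q_\ell}$-modules, which is the claim.

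The substantial point is the case $\ell\mid m$, which is also the case of real interest for the reduction in \S 2. When $\gcd(m,\ell)=1$, the $m$-torsion $\mathcal{J}[m]$ is finite \'etale over $U$, and local constancy is a formal consequence of the continuity of the \'etale fundamental group representation of $U_{\Q_\ell}$, combined with a Hensel-type lifting argument. When $\ell\mid m$, the group scheme $\mathcal{J}[m]$ is no longer \'etale over $U$, and the genuinely new input is Kisin's theorem, whose proof relies on integral $p$-adic Hodge theory to compare the mod $\ell^N$ representations of nearby fibres.
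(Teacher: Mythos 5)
Your proof is correct and follows the same approach as the paper: both reduce to citing Kisin's local constancy theorem (Theorem 5.1(1) of \cite{Kis99}) after noting that polynomials $\ell$-adically close to $f$ remain squarefree of the same degree and hence lie in the same $\ell$-adic family of hyperelliptic curves. Your additional commentary on the \'etale case versus the genuinely difficult case $\ell\mid m$ is accurate context but not needed for the argument.
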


\begin{proof}
This is a special case of \cite[Theorem 5.1(1)]{Kis99}. Note that for $N$ large enough, all $\tilde f\equiv f\mod{\ell^N}$ are squarefree, and so define a $\ell$-adic family of hyperelliptic curves of the same genus.
\end{proof}

With this theorem, we only need to construct genus $g$ hyperelliptic curves $C_{\ell}/\Q_{\ell}$ at each prime $\ell\leqslant 2g+1$ and $\ell|m$ and then glue them together with sufficient congruence conditions in order to realise the $m$-torsion as a tame extension.

To construct a curve $C/\Q_p$ such that $J_C$ is semistable with $\Q_{p}(J_C[p]) \cong \Q_{p}(\zeta_{p})$ we will use the theory of Mumford curves. For simplicity in this approach however, we will assume that $p$ is odd, so we briefly record below a curve that covers the case $p=2$.

\begin{proposition}
\label{prop2}
Let $a_1, \ldots, a_{g+1} \in \Z_2\setminus\{0\}$ have pairwise distinct 2-adic valuations, 
and $0\ne N\in\Z_2$ satisfies $v_2(N) \geqslant \sum_i v_2(a_i)$. 
Let $C/\Q_2$ be the genus $g$ hyperelliptic curve 
$$
  y^2+h(x)y=-N^2, \quad h(x)=\prod\limits_{i=1}^{g+1}(x-a_i).
$$ 
Then the $J_C/\Q_2$ is semistable, and $J_C[2] \subset J_C(\Q_2)$.
\end{proposition}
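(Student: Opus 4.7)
The plan is to split the proof into two essentially independent pieces: a Newton polygon computation putting all $2g+2$ geometric Weierstrass points in $\Q_2$ (which immediately gives the $2$-torsion statement), and then a semistable reduction argument.

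I would start by identifying the Weierstrass points of the model $F=y^2+h(x)y+N^2$: the condition $\partial F/\partial y=0$ forces $y_0=-h(x_0)/2$, which substituted back into $F=0$ gives $h(x_0)^2=4N^2$. So the Weierstrass $x$-coordinates are exactly the roots of $(h(x)-2N)(h(x)+2N)$. Now order the $a_i$ so that $v_i:=v_2(a_i)$ is strictly decreasing in $i$. Distinctness of the $v_i$ ensures that the $k$-th elementary symmetric polynomial has a unique term of smallest $2$-adic valuation, namely $a_{g+2-k}\cdots a_{g+1}$, giving $v_2(e_k)=v_{g+2-k}+\cdots+v_{g+1}$. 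Consequently the Newton polygon of $h(x)$ consists of $g+1$ length-one segments with distinct integer slopes $-v_1,\ldots,-v_{g+1}$. The polynomial $h(x)\pm 2N$ differs from $h(x)$ only in its constant term, and the hypothesis $v_2(N)\geq\sum_i v_i$ gives $v_2(2N)=1+v_2(N)>\sum_i v_i=v_2(a_1\cdots a_{g+1})$, so this constant term retains the same valuation. Thus $h(x)\pm2N$ share the Newton polygon of $h(x)$, and the Newton polygon factorisation theorem forces each to split into $g+1$ distinct linear factors over $\Q_2$. Since the $2$-torsion of a hyperelliptic Jacobian is generated by differences of Weierstrass points, the inclusion $J_C[2]\subseteq J_C(\Q_2)$ is immediate.

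For semistability, the naive reduction of the integral model is $y(y+\bar h(x))=0$, which is not nodal because $\bar h$ vanishes to order at least $g$ at $0$. I would therefore construct a semistable model by hand, guided by the cluster picture of the Weierstrass points found above. A Taylor expansion around each $a_i$ yields $v_2(w_i^+-w_i^-)=2+v_2(N)-(i-1)v_i-\sum_{j>i}v_j$ and $v_2(w_i^\pm-w_k^\pm)=\min(v_i,v_k)$ for $i\neq k$, so the cluster tree of $\{w_i^\pm\}$ is a ``comb'' in which every cluster has even size and integer depth. The plan is then to perform a sequence of blow-ups of the singular locus, one per level of this tree, showing inductively that each blow-up introduces a single new $\pp^1$ meeting the previous special fibre in a pair of ordinary double points, until the cluster tree is exhausted.

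I expect this last blow-up step to be the main obstacle, since one has to verify carefully the local structure of each blow-up in residue characteristic $2$, where the completion-of-square trick that trivialises the analogous computation in odd residue characteristic is unavailable. A cleaner alternative would be to invoke the theory of stable models of hyperelliptic curves in residue characteristic $2$ (e.g.\ Liu's work) and argue directly from the combinatorial data ``all clusters even, all depths integer, all Weierstrass points $\Q_2$-rational'' that a semistable model of $C$ exists over $\Z_2$, whence $J_C$ is semistable.
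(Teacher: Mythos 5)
Your argument for $J_C[2]\subset J_C(\Q_2)$ is essentially the paper's: complete the square to pass to $y^2=(h(x)-2N)(h(x)+2N)$, observe that $v_2(2N)>v_2(h(0))$ so each factor has the same completely broken Newton polygon as $h$ and hence splits over $\Z_2$, and conclude since the $2$-torsion is spanned by differences of Weierstrass points. That part is fine.

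The semistability half has a genuine gap. The paper does not go near a blow-up computation or a cluster criterion: it applies \cite[Thm.~1.2(6,7)]{Dok20} directly to the \emph{original} model $y^2+h(x)y=-N^2$, reading semistability (and totally toric reduction) off the Newton polytope of the bivariate polynomial once the Newton polygon of $h$ breaks into distinct integer slopes. Your plan instead works with the completed-square model $y^2=(h-2N)(h+2N)$ and hopes to deduce semistability from the combinatorics of the roots. This is the wrong model in residue characteristic $2$ (the change of variables $y\mapsto y/2$ is not integral), and, more importantly, the criterion you lean on --- ``all clusters even, all depths integer, all Weierstrass points rational $\Rightarrow$ semistable'' --- is a residue characteristic $\neq 2$ statement from the cluster-picture machinery; it is simply not a theorem when the residue characteristic is $2$, where semistable models of hyperelliptic curves can involve Artin--Schreier-type components and are governed by different invariants. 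Your stated fallback, to ``invoke Liu's work,'' is too vague to close this: Liu's results on stable reduction of hyperelliptic curves in residue characteristic $2$ do not package into the combinatorial criterion you describe. The honest fix is either to carry out the blow-ups on the Artin--Schreier-form model (which is essentially what \cite{Dok20} systematises via Newton polytopes), or to cite \cite[Thm.~1.2(6,7)]{Dok20} as the paper does.
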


\begin{proof}
By assumption on the $a_i$, the Newton polygon of $h$ breaks completely, and \cite[Thm 1.2(6,7)]{Dok20} 
shows that $J_C$ is semistable and has totally toric reduction. Next, completing the square and replacing 
$y$ by $y/2$ we see that $C$ is isomorphic to 
%\marginpar{RHS is square-free; easy as well but swept under the rug}%
$$
  y^2 = (h(x)-2N)(h(x)+2N).
$$
As $v_2(2N)>v_2(h(0))$, the polynomials $h(x)-2N$ and $h(x)+2N$ have the same Newton polygon as $h$, and
so factor completely over $\Z_2$ as well. It follows that $J_C[2]\subset J_C(\Q_2)$. 
\end{proof}

\section{Mumford curves and Whittaker groups}

In this paper, we will only need to concern ourselves with hyperelliptic Mumford curves in which case the Schottky group will be of a particular type called a Whittaker group. For more details on the background of Mumford curves in general, see \cite{GP80}. 

From now on, we suppose that $p \geqslant 3$ and let $K/\Q_p$ be a finite extension. Let $v$ be the normalised valuation on $K$, and $\OK,\pi,q_K,e,|\cdot|$ as in Notation \ref{mainnot}.

\begin{definition}
\label{defschottky}
	Let $\Gamma \!\subset\! \PGL_2(K)$ be a subgroup, acting on $\pp^1(\CK)$ by M\"obius transformations.
	\begin{enumerate}
		\item A point $x \in \pp^1(\CK)$ is a \emph{limit point} of $\Gamma$ if there exists $y \in \pp^1(\CK)$ and an infinite sequence $(\gamma_n) \subset \Gamma$ with $\gamma_n$ distinct and $\lim \gamma_n(y)=x$.
		\item $\Gamma$ is a \emph{Schottky group} if it is discrete, free, and finitely generated.
		\item Suppose $\Gamma$ is Schottky. Let $\Omega_{\Gamma} = \pp^1(\CK) - \{\text{limit points of } \Gamma\}$. Then $\Omega_{\Gamma}/\Gamma$ is a \emph{Mumford curve} of genus equal to the rank of $\Gamma$.
		\item Let $\Gamma$ be a Schottky group. If the associated Mumford curve is hyperelliptic, then $\Gamma$ 
		is called a \emph{Whittaker group}.
	\end{enumerate}
\end{definition}

\begin{example}
	Let $q \in K$ be such that $|q|<1$ and let $\gamma=\left( \begin{smallmatrix} q & 0 \\ 0 & 1 \end{smallmatrix} \right)$. Then $\Gamma=\langle \gamma \rangle$ is Schottky of rank~$1$. Moreover, $\Omega_{\Gamma}=\pp^1(\CK)-\{0, \infty\}$, and the corresponding Mumford curve is isomorphic to the Tate curve associated to $q$.
\end{example}

The construction of Mumford curves is analytic, so computing an algebraic model for them is in general difficult. The main approach is to construct a good fundamental domain, which we do via $p$-adic discs; we briefly set up some notation for them. \\

\noindent \textbf{Notation.} Let $c,r \in \CK$. We define the \emph{open disc} $B(c,r)$ and \emph{closed disc} 
$\overline{B(c,r)}$ with centre $c$ and radius $r$ as $$B(c,r)=\{ z \in \CK : |z-c|<|r|\}, \qquad \overline{B(c,r)}=\{ z \in \CK : |z-c| \leqslant |r|\}.$$

\begin{definition}
\label{fundG}
	Let $\Gamma$ be a Schottky group of rank $g$. Then a set $F$ is called a 
	\emph{good fundamental domain} for $\Gamma$ if:
	\begin{enumerate}
		\item $F=\pp^1(\CK) - \bigl(\bigcup_{i=1}^g (B_i \cup B'_i)\bigr)$ where $B_1,B'_1,\cdots,B_g,B'_g$ are $2g$ open discs with centres~in~$K$;
		\item The closed discs $\overline{B_1},\overline{B'_1},\cdots,\overline{B_g},\overline{B'_g}$ are disjoint;
		\item $\Gamma$ is generated by elements $\gamma_1, \cdots, \gamma_g$ such that $\gamma_i(\pp^1(\CK) - B_i)=\overline{B'_i}$ and $\gamma_i(\pp^1(\CK) - \overline{B_i})=B'_i$ for $1 \leqslant i \leqslant g$.
	\end{enumerate}
	In this case, we say that the generators $\gamma_1,\ldots,\gamma_g$ are \emph{in good position}.
\end{definition}

\begin{proposition}
	Every Schottky group has a good fundamental domain. Conversely, given a set $F$ satisfying conditions (i) and (ii), there exists a Schottky group with good fundamental domain $F$.
\end{proposition}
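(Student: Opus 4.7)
The plan is to treat the two directions separately, using the dynamics of hyperbolic Möbius transformations for the forward direction and a ping-pong argument for the converse.

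\textbf{Forward direction.} Let $\Gamma$ be a Schottky group of rank $g$ and fix free generators $\gamma_1, \ldots, \gamma_g$, which exist because $\Gamma$ is free of rank $g$. Each nontrivial element of $\Gamma$ is $p$-adically hyperbolic, i.e.\ conjugate in $\PGL_2(\CK)$ to a diagonal matrix $\left(\begin{smallmatrix} q & 0 \\ 0 & 1 \end{smallmatrix}\right)$ with $|q|<1$, and so possesses an attracting fixed point $z_i^+$ and a repelling fixed point $z_i^-$ in $\pp^1(\CK)$. Since distinct elements of a free Schottky group have distinct fixed-point pairs, the $2g$ points $z_i^\pm$ are pairwise distinct, so one can choose pairwise disjoint closed discs $\overline{B_i}$ around $z_i^-$ and $\overline{B_i'}$ around $z_i^+$. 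The contraction/expansion behaviour of $\gamma_i$ near its fixed points then forces the matching conditions of (iii) for discs of sufficiently small radius, since $\gamma_i$ contracts $\pp^1(\CK)\setminus B_i$ into any prescribed neighbourhood of $z_i^+$ once $B_i$ is small enough.

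\textbf{Converse direction.} For each $i$, choose a Möbius transformation $\gamma_i\in\PGL_2(\CK)$ sending $\pp^1(\CK)\setminus B_i$ bijectively onto $\overline{B_i'}$; such $\gamma_i$ exists by the 3-transitivity of $\PGL_2(\CK)$ on $\pp^1(\CK)$, applied to three boundary points. Let $\Gamma=\langle \gamma_1,\ldots,\gamma_g\rangle$. By the ping-pong lemma, any reduced word $\gamma_{i_n}^{\epsilon_n}\cdots\gamma_{i_1}^{\epsilon_1}$ maps $F$ into the disc corresponding to the leftmost letter (by induction, tracking images through the nested structure of the discs), so no reduced word acts as the identity and $\Gamma$ is free of rank $g$. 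The same image-tracking argument shows that translates of $F$ under distinct group elements are disjoint, and their union together with the limit set exhausts $\pp^1(\CK)$; hence $F$ is a good fundamental domain and $\Gamma$ is discrete.

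\textbf{Main obstacle.} The delicate point lies in the forward direction: one must verify that free generators can be chosen whose fixed-point pairs admit pairwise disjoint closed discs with the correct mapping behaviour, and that these discs also enjoy the precise homeomorphism property in (iii) rather than merely the right asymptotic behaviour. Different free generating sets produce different candidate discs, and in general one may need to replace a bad generating set by a Nielsen transformation of the free group before the disc construction succeeds. This is where the freeness and discreteness hypotheses enter nontrivially, and the standard treatment (as in Gerritzen--van der Put \cite{GP80}) proceeds via the action of $\Gamma$ on the Bruhat--Tits tree of $\PGL_2(K)$, so I would ultimately just cite this reference for the technical details.
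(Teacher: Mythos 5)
The paper's own proof is just a citation of Gerritzen--van der Put \cite[I.4.1.3 and I.4.1.4]{GP80}, and your proposal is effectively a sketch of the argument in that reference followed by the same citation, so in spirit the two agree. Your sketch is a useful gloss but contains a few loose ends beyond the one you flag. In the converse direction, the Schottky group must lie in $\PGL_2(K)$ by Definition 3.1, not $\PGL_2(\CK)$; since the discs have centres in $K$ one can indeed arrange $\gamma_i\in\PGL_2(K)$, but this needs to be said. Also, sending three boundary points of $\partial B_i$ to three boundary points of $\partial B_i'$ pins down a unique M\"obius transformation, but it does not by itself guarantee that it carries $\pp^1\setminus B_i$ onto $\overline{B_i'}$ rather than onto $\pp^1\setminus B_i'$; one must check (or tune the choice of image points) to land on the correct side. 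In the forward direction, the real content is exactly the point you identify as the ``main obstacle'': an arbitrary free generating set need not be in good position, and one must pass to a Nielsen-equivalent generating set, typically via the action on the Bruhat--Tits tree, before the disc construction goes through. Since you correctly isolate this as the hard step and defer to \cite{GP80} for it, as the paper does, your proposal is acceptable as a sketch, though it should not be mistaken for a self-contained proof.
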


\begin{proof}
	See \cite[I.4.1.3 and I.4.1.4]{GP80}.
\end{proof}

For hyperelliptic Mumford curves, one constructs a Whittaker group via a suitable choice of $2g+2$ points as follows: \\

\begin{construction}
\label{conZ}
\leavevmode
\begin{enumerate}
\item Let $Z=\{a_1,b_1,\ldots,a_g,b_g,a_{\infty}\!=\!1,b_{\infty}\!=\!\infty \}$ be a set of $2g+2$ distinct points of~$\pp^1(K)$.
\item For each pair $a_i,b_i$, let $c_i=\tfrac{a_i+b_i}{2}$, $r_i=\tfrac{b_i-a_i}{2}$ if $i \leqslant g$.
\item Let $s_i=\begin{psmallmatrix} c_i & r_i^2-c_i^2 \\ 1 & -c_i \end{psmallmatrix}$ if $i \leqslant g$ and $s_{\infty}=\begin{psmallmatrix} 1 & -2 \\ 0 & -1 \end{psmallmatrix}$  be involutions in $\PGL_2(K)$ fixing $a_i,b_i$.
\item Let $\Gamma_Z=\langle s_1s_{\infty},\cdots, s_gs_{\infty} \rangle$.
\end{enumerate}
\end{construction}

\begin{definition}
	Let $Z$ be a set of $2g+2$ distinct points. If the corresponding group $\Gamma_Z$ is a Whittaker group of rank $g$, then we say that $Z$ is in good position.
\end{definition}

\begin{remark}
\leavevmode
\begin{enumerate}
	\item To check if $\Gamma_Z$ is Schottky, let $B_i=B(c_i,r_i)$ and $B'_i=s_{\infty}(B_i)$  for $i \leqslant g$. Then it suffices to check the conditions (i) and (ii) of Definition \ref{fundG} for these $2g$ discs to see if they define a good fundamental domain.
	\item We can always suppose that $0,1,\infty \in Z$ by applying a M\"obius transformation. 
  This changes $\Gamma_Z$ to a conjugate subgroup and gives an isomorphic Mumford curve.
	\item Note that the construction requires a choice of pairing on $Z$. One can show that there is at most one pairing on $Z$ such that it is in good position.
  \item The equation of the hyperelliptic curve $C=\Omega_{\Gamma_Z}/\Gamma_Z$ is (see {\cite[p.279]{GP80}}) 
	$$
	  C\colon y^2=\prod\limits_{z \in Z} (x-\theta(0,1;z)),%; \qquad  \theta(0,1;z)=\!\!\prod\limits_{w\in W} \tfrac{z-w(0)}{z-w(1)}
	$$ where {$\theta(0,1;z)=\!\!\prod\nolimits_{w\in W_Z} \tfrac{z-w(0)}{z-w(1)}$} and $W_Z=\langle s_1, \ldots, s_g, s_{\infty} \rangle$ is the group generated by the associated involutions;
  the 2:1 map $C\to \pp^1$ is $\Omega_{\Gamma_Z}/\Gamma_Z\to \Omega_{\Gamma_Z}/W_Z$.
\end{enumerate}
	
\end{remark}

\begin{lemma}[{\cite[Lemma 5.5, Theorem 5.7]{Kad07}}]
\label{goodpos}
	Let $Z=\{a_1=0,b_1,a_2,\ldots,a_g,b_g,a_{\infty}=1$, $b_{\infty}=\infty\}$ be a set of $2g+2$ distinct points. Suppose
	\begin{itemize}
		\item $0<|b_1|<|a_2| \leqslant |b_2| \leqslant |a_3| \cdots \leqslant |b_g| < 1$;
		\item $\tfrac{|r_i|}{|c_i-c_j|}<1$ for all distinct $1 \leqslant i,j \leqslant g$.
	\end{itemize}
	Then:
	\begin{enumerate}
		\item The points of $Z$ are in good position;
		\item $\Gamma=\langle s_1s_{\infty}, \cdots, s_gs_{\infty} \rangle$ is a Whittaker group of rank $g$;
		\item A good fundamental domain for $\Gamma$ is given by the complement of the discs $B_i=B(c_i,r_i)$ and $B'_i=B(2-c_i,r_i)$, $1 \leqslant i \leqslant g$.
	\end{enumerate}
\end{lemma}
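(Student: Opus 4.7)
The plan is to verify the three conditions of Definition \ref{fundG} for the candidate
\[
F \;=\; \pp^1(\CK) - \bigcup_{i=1}^g (B_i \cup B'_i),
\]
then invoke the Proposition preceding the lemma to conclude $\Gamma_Z$ is Schottky of rank $g$ with $F$ as a good fundamental domain, and finally identify a hyperelliptic involution to upgrade \textquotedblleft Schottky\textquotedblright{} to \textquotedblleft Whittaker\textquotedblright.

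First I would check the disjointness of closures (condition (ii) of Definition \ref{fundG}). Since $s_\infty\colon z\mapsto 2-z$, we have $B'_i=B(2-c_i,r_i)$. The hypothesis $|r_i|/|c_i-c_j|<1$, symmetrised in $i$ and $j$, forces $|c_i-c_j|>\max(|r_i|,|r_j|)$, which by the ultrametric inequality yields $\overline{B_i}\cap\overline{B_j}=\emptyset$; translating by $2$ handles $\overline{B'_i}\cap\overline{B'_j}$. For the mixed intersection I compute $|c_i-(2-c_j)|=|2-c_i-c_j|=1$, using $|c_i|,|c_j|\leqslant|b_g|<1$ (from $c_k=(a_k+b_k)/2$ and the valuation chain) together with $|2|=1$ as $p$ is odd; since every $|r_k|<1$, this gives $\overline{B_i}\cap\overline{B'_j}=\emptyset$.

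Next, for condition (iii) I would analyse the action of the generators. In the coordinate $w=z-c_i$, the involution $s_i$ takes the form $w\mapsto r_i^2/w$, and the identity $|r_i^2/w|=|r_i|^2/|w|$ shows that it swaps the open disc $\{|w|<|r_i|\}$ with the open complement $\{|w|>|r_i|\}$ and the corresponding closed sets with each other. Since $s_\infty$ permutes $B_i$ and $B'_i$ wholesale, composing gives $(s_\infty s_i)(\pp^1(\CK)-B_i)=\overline{B'_i}$ and the analogous equality with the closures swapped; replacing $\gamma_i$ by $(s_is_\infty)^{\pm 1}$, which generates the same $\Gamma_Z$, gives condition (iii). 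The cited Proposition then produces a Schottky group of rank $g$ with $F$ as a good fundamental domain, and by matching generators this group is $\Gamma_Z$, establishing the rank-$g$ Schottky statement and the fundamental-domain description in (iii).

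Finally, to upgrade to Whittaker (and thus establish (i) and the remaining part of (ii)), I would bring in $W_Z=\langle s_1,\ldots,s_g,s_\infty\rangle$. Modulo $\Gamma_Z$ every generator of $W_Z$ is congruent to $s_\infty$, so $[W_Z:\Gamma_Z]\leqslant 2$, with equality because $\Gamma_Z$ is torsion-free (Schottky of rank $g\geqslant 1$) whereas $s_\infty$ has order $2$. The limit sets of $\Gamma_Z$ and $W_Z$ coincide, so $W_Z$ acts on $\Omega_{\Gamma_Z}$, and the nontrivial coset descends to an involution on the Mumford curve $\Omega_{\Gamma_Z}/\Gamma_Z$ with quotient $\Omega_{\Gamma_Z}/W_Z\cong\pp^1$, exhibiting the curve as hyperelliptic, compatibly with the theta-series equation in the preceding remark. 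The main obstacle I anticipate is the orientation bookkeeping in the second paragraph: tracking carefully which open or closed disc each of $s_i$, $s_\infty$, and their composites sends where, so as to pick the sign convention for $\gamma_i$ consistent with Definition \ref{fundG}.
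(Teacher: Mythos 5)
The paper does not prove this lemma itself: it is cited directly from Kadziela \cite[Lemma 5.5, Theorem 5.7]{Kad07}, so there is no ``paper's proof'' to compare against. Your proposal supplies a genuine proof sketch where the paper only cites, and the structure you choose --- verify Definition~\ref{fundG}(i)--(iii) for the explicit discs, invoke the ping-pong statement to get a rank-$g$ Schottky group, then pass to the index-$2$ overgroup $W_Z$ to obtain a hyperelliptic involution --- is the standard route and is essentially what Kadziela does.

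The ping-pong half is sound. Your disjointness computation is correct: $|r_i|/|c_i-c_j|<1$ symmetrised gives $|c_i-c_j|>\max(|r_i|,|r_j|)$, the shift $z\mapsto 2-z$ handles $\overline{B'_i}\cap\overline{B'_j}$, and $|c_k|<1$, $|r_k|<1$, $|2|=1$ give $|2-c_i-c_j|=1$ for the mixed cases (including $i=j$, which you should make explicit). The identification $\gamma_i=(s_is_\infty)^{-1}=s_\infty s_i$ with $\gamma_i(\pp^1-B_i)=\overline{B'_i}$, $\gamma_i(\pp^1-\overline{B_i})=B'_i$ follows from writing $s_i$ as $w\mapsto r_i^2/w$ in the coordinate $w=z-c_i$, exactly as you say. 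One small caveat: the cited Proposition, as stated, only asserts that \emph{some} Schottky group has $F$ as a good fundamental domain; what you actually need (and what \cite[I.4.1.4]{GP80} provides) is the ping-pong lemma saying that any $\gamma_1,\ldots,\gamma_g$ satisfying (iii) for such an $F$ generate a free, discrete group of rank $g$. You should cite that form rather than the existence version.

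The genuine gap is in the final paragraph. The group-theoretic part ($[W_Z:\Gamma_Z]=2$, $\Gamma_Z\trianglelefteq W_Z$, equality of limit sets) is fine, but you assert $\Omega_{\Gamma_Z}/W_Z\cong\pp^1$ without argument. This requires showing the quotient has genus $0$, which is not automatic from having an involution. The standard route is a Riemann--Hurwitz count: the fixed points on $C=\Omega_{\Gamma_Z}/\Gamma_Z$ of the induced involution are the $\Gamma_Z$-orbits of fixed points of the involutions in $W_Z\setminus\Gamma_Z$; one shows $W_Z$ is the free product $\langle s_1\rangle *\cdots *\langle s_g\rangle *\langle s_\infty\rangle$ (another ping-pong argument, now with $g+1$ players and $2g+2$ discs), so those involutions are exactly the conjugates of the $s_i$ and $s_\infty$, contributing the $2g+2$ branch points $a_1,b_1,\ldots,a_\infty,b_\infty$ modulo $\Gamma_Z$; then $2g-2=2(2g'-2)+(2g+2)$ gives $g'=0$. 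Alternatively one invokes the Gerritzen--van der Put theta-function model as in the paper's preceding Remark, but then one is close to just re-citing \cite{GP80} or \cite{Kad07}. Either way this step needs to be filled in before the proof is complete.
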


We will now implicitly assume these assumptions in the lemma whenever we deal with a Whittaker group. For two discs $B,B'$, we denote by $d(B,B')$ the corresponding metric coming from the standard one on the Berkovich line $\pp^{1,an}$ (see for example \cite[p.7]{MR15}).

\begin{lemma}
\leavevmode
	\begin{enumerate}
%		\item[]
		\item Let $i \neq j$. Then $d(B_i,B_j)=d(B'_i,B'_j)=\log_p \frac{|c_i-c_j|^2}{|r_ir_j|}$.
		\item For all $i,j$, $d(B_i,B'_j)=\log_p \frac{1}{|r_ir_j|}$.
	\end{enumerate}
	In particular, the minimum distance, $m_{\Gamma}$\footnote{This depends on the choice of a good fundamental domain and not just $\Gamma$.}, between two distinct discs is $\min\limits_{i \neq j \leqslant g} \log_p \frac{|c_i-c_j|^2}{|r_ir_j|}$.
\end{lemma}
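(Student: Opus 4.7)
The plan is to reduce everything to the standard distance formula on the Berkovich projective line. Recall that on $\mathbb{P}^{1,an}$, two disjoint discs $\bar B(c_1, r_1)$ and $\bar B(c_2, r_2)$ (meaning $|c_1-c_2|>\max(|r_1|,|r_2|)$) correspond to type II/III points whose shortest path in the tree passes through their smallest common enclosing disc $\bar B(c_1, |c_1-c_2|)=\bar B(c_2, |c_1-c_2|)$. Each of the two leg lengths is $\log_p$ of the ratio of outer and inner radii, so the total distance equals
\[
\log_p\frac{|c_1-c_2|}{|r_1|}+\log_p\frac{|c_1-c_2|}{|r_2|}=\log_p\frac{|c_1-c_2|^2}{|r_1r_2|}.
\]

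For (i), I would apply this directly: the second hypothesis of Lemma \ref{goodpos} states $|r_i|/|c_i-c_j|<1$ for distinct $i,j$, so the formula applies and gives $d(B_i,B_j)=\log_p(|c_i-c_j|^2/|r_ir_j|)$. For the primed discs $B'_k=B(2-c_k,r_k)$, one notes the identity $|(2-c_i)-(2-c_j)|=|c_i-c_j|$, so the same formula delivers the same value.

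For (ii), the centre separation between $B_i=B(c_i,r_i)$ and $B'_j=B(2-c_j,r_j)$ is $|c_i+c_j-2|$, and the key step is to show this equals $1$. Since $p$ is odd, $|2|=1$. The first bullet of Lemma \ref{goodpos} forces $|a_k|,|b_k|<1$ for all $k\leqslant g$ (recall $a_1=0$), whence $|c_k|=|(a_k+b_k)/2|\leqslant\max(|a_k|,|b_k|)<1$. Thus $|c_i+c_j|<1=|{-2}|$, and the ultrametric inequality gives $|c_i+c_j-2|=1$. Since $|r_i|,|r_j|<1$ as well, the two discs are disjoint and the distance formula yields $\log_p(1/|r_ir_j|)$.

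Finally, the \emph{in particular} clause will follow at once: for any $i\neq j$ one has $|c_i-c_j|<1$ (by the same argument as above), so $\log_p(|c_i-c_j|^2/|r_ir_j|)<\log_p(1/|r_ir_j|)$, meaning each type-(i) distance is strictly smaller than the type-(ii) distance for the same index pair. I expect the only subtle point is the centre computation $|c_i+c_j-2|=1$ in (ii); everything else is immediate from the Berkovich formula together with the standing hypotheses of Lemma \ref{goodpos}.
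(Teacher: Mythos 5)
Your proofs of parts (i) and (ii) are correct and follow the same route as the paper's: reduce to the standard path-distance formula on the Berkovich line via the smallest enclosing disc, and for (ii) observe that the centre separation $|c_i+c_j-2|$ is a unit because $p$ is odd and each $c_k$ lies in $\pi\OK$. The paper simply asserts that the $c_i$ are integral non-units; you derive this from the first bullet of Lemma~\ref{goodpos} via $|c_k|\leqslant\max(|a_k|,|b_k|)<1$, which is a worthwhile spelling-out of the same fact.

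On the ``in particular'' clause, though, your argument (matching the paper's equally terse ``the minimum now follows'') only compares $d(B_i,B_j)$ with $d(B_i,B'_j)$ for the \emph{same} off-diagonal index pair $i\neq j$, and never addresses the diagonal distances $d(B_k,B'_k)=\log_p(1/|r_k|^2)$. Under the bare hypotheses of Lemma~\ref{goodpos} the inequality $\min_{i\neq j}\log_p\bigl(|c_i-c_j|^2/|r_ir_j|\bigr)\leqslant\log_p(1/|r_k|^2)$ is not automatic: for instance with $g=2$, $r_1=c_1=\pi^{10}$, $r_2=\pi^{5}$, $c_2=2\pi$, all hypotheses of Lemma~\ref{goodpos} hold, yet $d(B_2,B'_2)=\log_p(|\pi|^{-10})<\log_p(|\pi|^{-13})=d(B_1,B_2)$, so the off-diagonal minimum is not the global minimum. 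This is a latent imprecision in the paper's own lemma (harmless for the specific parameters of Theorem~\ref{mainthm}, where the $\alpha_i,\beta_i$ are consecutive), but your closing remark that ``everything else is immediate'' overlooks the need to bound the $d(B_k,B'_k)$.
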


\begin{proof}
For the first part, note that the smallest disc containing $B_i$ and $B_j$ is $B(c_i,|c_i-c_j|)$ and the statement follows from the definition. For the second part, we get $d(B_i,B'_j)=\tfrac{|2-c_i-c_j|^2}{|r_ir_j|}$ and note that the numerator is a unit as $p \neq 2$ and the centres $c_i$ are integral non-units. The minimum now follows.
\end{proof}

\section{Approximation of the period matrix}

Let $\Gamma$ be a Schottky group with generators $\gamma_1, \ldots, \gamma_g$ in good position. Let $B_1, \ldots, B_g$, $B'_1, \ldots, B'_g$ be the associated disjoint discs defining the fundamental domain such that $\gamma_k(\pp^1(\CK) - B'_k)=\overline{B_k}$ for all $k$. We define the closure of an open disc $B$ as $\overline{B}$, the boundary of $B$ to be $\partial B:=\overline{B} \setminus B$ and the diameter of $B$ as $\diam(B)=\sup\limits_{x,y \in \overline{B}} |x-y|$.

\begin{notation}
	For a free group $\Gamma=\langle \gamma_1, \gamma_2, \cdots, \gamma_g \rangle$, we let $\Gamma_n$ be the subset consisting of all elements of $\Gamma$ of reduced word length at most $n$.
\end{notation}

For $\id \neq \gamma \in \Gamma_1$, we define $\Bg= \begin{cases}
	B_k \qquad \text{ if $\gamma=\gamma_k,$ \, \quad $k=1,\ldots,g$}; \\
	B'_k \qquad \text{ if $\gamma=\gamma_k^{-1},$ \quad $k=1,\ldots,g$}.
\end{cases}$

\begin{lemma}
\label{approx1}
	Let $a \in \partial B'_i, z \in \partial B'_j$. Let $\id \neq \gamma \in \Gamma_1$. Let $z_j,z_j',\zg$ be centres of $B_j,B'_j,\Bg$ respectively.
	\begin{enumerate}
	\item[]
	\item If $\gamma \neq \gamma_j^{-1}$, then $\left| \dfrac{z-\gamma a}{z-\gamma\gamma_ia}-1 \right| \leqslant \dfrac{\diam(\overline{\Bg})}{|z_j'-\zg|}$;
	\item If $\gamma \neq \gamma_j$, then $\left| \dfrac{\gamma_jz-\gamma a}{\gamma_jz-\gamma\gamma_ia}-1 \right| \leqslant \dfrac{\diam(\overline{\Bg})}{|z_j-\zg|}$.
	\end{enumerate}
\end{lemma}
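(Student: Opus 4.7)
The plan is to reduce each inequality to the algebraic identity $\frac{A-p}{A-q}-1=\frac{q-p}{A-q}$, bound the resulting numerator by $\diam(\overline{\Bg})$ via the ultrametric triangle inequality, and evaluate the denominator exactly using the standard fact that disjoint closed discs $\overline{D_1},\overline{D_2}\subset\pp^1(\CK)$ with centres $d_1,d_2$ satisfy $|x-y|=|d_1-d_2|$ for every $x\in\overline{D_1}$, $y\in\overline{D_2}$.

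First I would show that $\gamma a,\gamma\gamma_i a\in\overline{\Bg}$, which immediately gives $|\gamma\gamma_i a-\gamma a|\leqslant\diam(\overline{\Bg})$. This rests on a single principle distilled from the good fundamental domain property: if $x$ lies in some closed disc $\overline{B_k}$ or $\overline{B'_k}$ and $x$ is not in the source-avoided disc of $\delta\in\Gamma_1\setminus\{\id\}$ (which is $B'_k$ for $\delta=\gamma_k$ and $B_k$ for $\delta=\gamma_k^{-1}$), then $\delta x\in\overline{B_\delta}$. Since $a\in\partial B'_i\subset\overline{B'_i}$ and $B'_i$ is open, $a\notin B'_i$, and the pairwise disjointness of the $2g$ closed discs shows that $a$ avoids the source-avoided disc of every $\gamma\in\Gamma_1\setminus\{\id\}$; hence $\gamma a\in\overline{\Bg}$ unconditionally. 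Similarly $\gamma_i a\in\partial B_i\subset\overline{B_i}$ (because $\gamma_i$ sends $\partial B'_i$ to $\partial B_i$, so in particular $\gamma_i a\notin B_i$), and a second application of the principle to $\gamma_i a$ in place of $a$ yields $\gamma\gamma_i a\in\overline{\Bg}$ uniformly for all admissible $\gamma$.

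Next I would treat the denominator via the ultrametric equality. For (i), $z\in\overline{B'_j}$ and $\gamma\gamma_i a\in\overline{\Bg}$; the hypothesis $\gamma\neq\gamma_j^{-1}$ is precisely the statement $\Bg\neq B'_j$, so these two closed discs are distinct and hence disjoint, giving $|z-\gamma\gamma_i a|=|z'_j-\zg|$, and (i) follows. Part (ii) is identical after replacing $z$ by $\gamma_j z$: since $z\notin B'_j$, we have $\gamma_j z\in\overline{B_j}$, the assumption $\gamma\neq\gamma_j$ makes $\overline{B_j}$ and $\overline{\Bg}$ disjoint, and so $|\gamma_j z-\gamma\gamma_i a|=|z_j-\zg|$, producing the second bound.

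The only step with any friction is the membership $\gamma\gamma_i a\in\overline{\Bg}$, which entails checking the four sub-cases $\gamma=\gamma_k^{\pm 1}$ with $k=i$ or $k\neq i$; each reduces at once to disjointness of $\overline{B_i}$ from one of the discs $B_k,B'_k$, and the apparently awkward case $\gamma=\gamma_i^{-1}$ is in fact clean because $\gamma_i a\in\partial B_i$ lies outside $B_i$. Beyond this short case-check the argument is purely formal.
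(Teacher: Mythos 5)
Your proof is correct and follows the same line as the paper's: rewrite the difference as $\frac{\gamma\gamma_i a-\gamma a}{z-\gamma\gamma_i a}$, bound the numerator by $\diam(\overline{\Bg})$ via membership of both points in $\overline{\Bg}$, and evaluate the denominator as the distance between centres of two disjoint closed discs using the ultrametric equality. The only difference is that you spell out the membership step $\gamma a,\gamma\gamma_i a\in\overline{\Bg}$ in detail (via the ``source-avoided disc'' principle and disjointness of the $2g$ closed discs), whereas the paper simply asserts it; this is a cosmetic rather than substantive distinction.
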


\begin{proof}
	We prove the first part; the second part is analogous. First note that $\frac{z-\gamma a}{z-\gamma \gamma_ia}-1=\frac{\gamma \gamma_ia-\gamma a}{z-\gamma \gamma_ia}$. Since $a \in \partial B'_i$, we have that $\gamma a, \gamma \gamma_ia \in \overline{\Bg}$ and hence $|\gamma \gamma_ia - \gamma a| \leqslant \diam(\overline{\Bg})$.
	
	Since $\gamma \neq \gamma_j^{-1}$, the discs $B'_j$ and $\Bg$ are disjoint, so let $z_j', \zg$ be centres of $B'_j,\Bg$ respectively. Now $z,z_j' \in \overline{B'_j}$ and $\zg \notin\overline{B'_j}$, so $|z_j'-\zg|>|z-z_j'|$. Similarly $|z_j'-\zg|>|\zg-\gamma \gamma_ia|$ using $\Bg$. Hence 
	$$
		|z-\gamma \gamma_ia| = |z-z_j'+z_j'-\zg+\zg-\gamma\gamma_ia| = |z_j'-\zg|,
  $$ 
  by the ultrametric triangle inequality.
\end{proof}

We now return to the case where $\Gamma$ is a Whittaker group and continue our notation from~\S 3.
The Jacobian $J_{\Omega_\Gamma/\Gamma}$ has a $g\times g$ \emph{period matrix} $Q=(Q_{ij})$ 
whose entries can be computed in terms of $\Gamma$ (see \cite{GP80} VI.2)
%
%Let $Q=(Q_{ij})$ denote the period matrix of the Jaciobian $J_{\Omega_\Gamma/\Gamma}$. 
%Recall that 
$$
  Q_{ij}= \prod\limits_{\gamma \in \Gamma} \dfrac{(z-\gamma a)(\gamma_jz-\gamma\gamma_i a)}
     {(z-\gamma\gamma_ia)(\gamma_jz- \gamma a)},
$$ 
for any choice of non-conjugate ordinary points $a,z$.

\begin{notation}
For a subset $S \subset \Gamma$, let 
$$
  Q_{ij}^S=\prod\limits_{\gamma \in S} \dfrac{(z-\gamma a)(\gamma_jz-\gamma\gamma_i a)}{(z-\gamma\gamma_ia)(\gamma_jz- \gamma a)}.
$$ 
If $S=\Gamma_n$, we write $Q_{ij}^n$ for $Q_{ij}^{\Gamma_n}$; 
clearly $\lim_{n\to\infty} Q_{ij}^n = Q_{ij}$.
\end{notation}

\begin{lemma}
\label{approx}
	Let $q \in K$ be such that $\max\limits_{i \neq j} \dfrac{|r_i|}{|c_i-c_j|}\leqslant |q|<1$. Then 
	$\Bigl| \dfrac{Q_{ij}^1}{Q_{ij}}-1 \Bigr| < |q|.$
\end{lemma}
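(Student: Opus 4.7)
My plan is to write
$$
\frac{Q_{ij}}{Q_{ij}^1}=\prod_{\gamma\in\Gamma\setminus\Gamma_1}T_\gamma,\qquad T_\gamma:=\frac{(z-\gamma a)(\gamma_jz-\gamma\gamma_ia)}{(z-\gamma\gamma_ia)(\gamma_jz-\gamma a)},
$$
and show $|T_\gamma-1|<|q|$ for every reduced word $\gamma$ of length $\geq 2$. Each $T_\gamma$ factors as $A_\gamma B_\gamma$, matching the two quantities estimated in Lemma~\ref{approx1}(i)(ii) (up to inversion, which preserves the distance from $1$). The ultrametric inequality then gives $\left|\prod T_\gamma-1\right|<|q|$, hence $\left|\prod T_\gamma\right|=1$, and so $\left|Q_{ij}^1/Q_{ij}-1\right|=\left|\prod T_\gamma-1\right|<|q|$.

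The key input is an extension of $\Bg$ to all $\gamma\in\Gamma\setminus\{\id\}$: for $\gamma=\alpha_1\cdots\alpha_n$ in reduced form, set $\overline{\Bg}=\alpha_1\cdots\alpha_{n-1}(\overline{B_{\alpha_n}})$, a closed disc nested strictly inside $\overline{B_{\alpha_1}}$. A Mobius-derivative computation, checking the four sign combinations for $\alpha_1,\alpha_2$, gives $\diam(\overline{\Bg})\leq|r_{\alpha_1}|^2|r_{\alpha_2}|/|c_{\alpha_2}-c_{\alpha_1,\mathrm{src}}|^2$, and this is bounded by $|q|^3$ in all cases (shrinking geometrically for longer words). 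When the first letter $\alpha_1$ is neither $\gamma_j$ nor $\gamma_j^{-1}$, the disc $\overline{\Bg}$ is disjoint from both $\overline{B_j}$ and $\overline{B'_j}$, the proof of Lemma~\ref{approx1} goes through verbatim, and $|T_\gamma-1|\leq\diam(\overline{\Bg})<|q|$ since both $|z_j'-\zg|$ and $|z_j-\zg|$ equal $1$.

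The delicate case is $\alpha_1\in\{\gamma_j^{\pm 1}\}$, where $\overline{\Bg}$ sits inside $\overline{B_j}$ or $\overline{B'_j}$ and the relevant Lemma~\ref{approx1} denominator shrinks. Here I would argue directly: $\zg$ lies at distance $\approx|r_j|^2/|c_{\alpha_2}-c_j|$ from the centre of the containing disc, which is strictly less than $|r_j|$; so by the ultrametric $|z-\gamma a|$ (respectively $|\gamma_jz-\gamma a|$) equals $|r_j|$ exactly, and the bound becomes $\diam(\overline{\Bg})/|r_j|\leq\bigl(|r_j|/|c_{\alpha_2}-c_j|\bigr)\bigl(|r_{\alpha_2}|/|c_{\alpha_2}-c_j|\bigr)\leq|q|^2<|q|$. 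The main obstacle is precisely this degenerate case, where Lemma~\ref{approx1}'s generic bound is too weak and one has to exploit the ultrametric by hand to recover the strict inequality.
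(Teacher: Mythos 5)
Your strategy is genuinely different from the paper's: the paper's proof is a two-line citation of \cite[Theorem 3.6]{MR15} (giving $|Q_{ij}^1/Q_{ij}-1|\leqslant q_K^{-em_\Gamma}$) followed by the observation that $q_K^{-em_\Gamma}=\max_{i\neq j}\tfrac{|r_ir_j|}{|c_i-c_j|^2}<\max_{i\neq j}\tfrac{|r_i|}{|c_i-c_j|}\leqslant|q|$. You instead attempt to reprove the tail estimate from scratch by factoring over $\Gamma\setminus\Gamma_1$ and extending Lemma~\ref{approx1} to longer reduced words. That is a reasonable ambition and the overall skeleton (nested discs $\overline{B_\gamma}$, geometric decay of diameters, ultrametric control of the infinite product) is the right shape.

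However there is a concrete error in the ``easy'' case. When $\alpha_1=\gamma_k^{\pm1}$ with $k\neq j$, you claim that \emph{both} $|z_j'-z_\gamma|$ and $|z_j-z_\gamma|$ equal $1$, so that the Lemma~\ref{approx1}-type bound collapses to $\diam(\overline{B_\gamma})$. This is false: $B_\gamma$ lies inside $\overline{B_{\alpha_1}}$, whose centre is $c_k$ (if $\alpha_1=\gamma_k$) or $2-c_k$ (if $\alpha_1=\gamma_k^{-1}$). So exactly one of the two distances is $|2-c_j-c_k|=1$, while the other is $|c_k-c_j|<1$. The correct bound from Lemma~\ref{approx1} is therefore $\diam(\overline{B_\gamma})/|c_k-c_j|$, not $\diam(\overline{B_\gamma})$. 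One can still rescue the estimate by pairing one factor of $|r_k|$ from the numerator against $|c_k-c_j|$ (using $|r_k|/|c_k-c_j|\leqslant|q|$) and the remaining factors against the source distance $|d|$; but the mechanism is different from what you wrote, and as stated the step does not hold. Two smaller gaps: you verify the diameter bound explicitly only for words of length $2$ and wave at the induction for longer words, and you do not address convergence of the infinite product $\prod_{\gamma\in\Gamma\setminus\Gamma_1}T_\gamma$ before applying the ultrametric inequality. The $|z_j-z_\gamma|=1$ claim is the substantive issue; the rest is expository.
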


\begin{proof}
	We have $\Bigl| \tfrac{Q_{ij}^1}{Q_{ij}}-1 \Bigr| \leqslant q_K^{-em_{\Gamma}}$ by \cite[Theorem 3.6]{MR15}\footnote{Note that under our normalisation $|p|=q_K^{-e}$ in contrast to \cite{MR15} who use $|p|=p^{-1}$.},
	and $\max\limits_{i \neq j} \tfrac{|r_i|}{|c_i-c_j|}> \max\limits_{i \neq j} \tfrac{|r_ir_j|}{|c_i-c_j|^2}=q_K^{-em_{\Gamma}}$ since the discs are disjoint.
\end{proof}

\begin{theorem}
\label{redid}
	Let $q \in K$ be such that $\max\limits_{i \neq j} \frac{|r_i|}{|c_i-c_j|}\leqslant |q|<1$. Let $a \in \partial B'_i$, $z \in \partial B'_j$ be distinct mod $\Gamma$. Define $$Q_{ij}^{\alpha}=Q_{ij}^{0}\dfrac{(z-\gamma_j^{-1}a)(\gamma_jz-\gamma_ja)}{(z-\gamma_j^{-1}\gamma_ia)(\gamma_jz-\gamma_j\gamma_ia)}.$$ Then $$\left| \dfrac{Q_{ij}^{\alpha}}{Q_{ij}}-1 \right| \leqslant |q|.$$
\end{theorem}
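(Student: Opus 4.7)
The plan is to use $Q_{ij}^1$ as a bridge between $Q_{ij}$ and $Q_{ij}^\alpha$. Lemma~\ref{approx} already gives $|Q_{ij}^1/Q_{ij}-1| < |q|$, so by the non-Archimedean product rule (``$|u-1|,|v-1|<1$ implies $|uv-1| \leqslant \max(|u-1|,|v-1|)$'') it suffices to prove $|Q_{ij}^\alpha/Q_{ij}^1 - 1| \leqslant |q|$.

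To do this, I would expand $Q_{ij}^1 = Q_{ij}^0 \prod_{\gamma \in \Gamma_1 \setminus \{\id\}} f_\gamma$ with the individual factor
$$
  f_\gamma = \frac{z-\gamma a}{z-\gamma\gamma_i a} \cdot \frac{\gamma_j z - \gamma\gamma_i a}{\gamma_j z - \gamma a},
$$
and analyse each $f_\gamma$ via Lemma~\ref{approx1}. Part~(i) makes the first half $|q|$-close to $1$ whenever $\gamma \neq \gamma_j^{-1}$, and part~(ii) does the same for (the reciprocal of) the second half whenever $\gamma \neq \gamma_j$. The hypothesis on $q$, combined with good-position estimates, ensures that the bounds $\diam(\overline{B_\gamma})/|z'_j - z_\gamma|$ and $\diam(\overline{B_\gamma})/|z_j - z_\gamma|$ coming out of Lemma~\ref{approx1} are indeed at most $|q|$ for every $\gamma \in \Gamma_1 \setminus \{\id\}$. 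Three cases then arise: for $\gamma \notin \{\id,\gamma_j,\gamma_j^{-1}\}$ both halves are controlled, so $f_\gamma$ itself is $|q|$-close to $1$; for $\gamma = \gamma_j^{-1}$ only the second half is controlled, so $f_{\gamma_j^{-1}}$ is $|q|$-close to its first half; and for $\gamma = \gamma_j$ only the first half is controlled, so $f_{\gamma_j}$ is $|q|$-close to its second half. Multiplying these approximations together, $\prod_{\gamma \in \Gamma_1 \setminus \{\id\}} f_\gamma$ collapses modulo $|q|$ to precisely the correction factor in the definition of $Q_{ij}^\alpha$, which yields the desired bound.

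The main obstacle is the combinatorial bookkeeping: unpacking $\diam(\overline{B_\gamma})$ and $|z_j^{(\prime)} - z_\gamma|$ in terms of $|r_k|$ and $|c_k - c_\ell|$ for each first-level $\gamma$, and then checking that multiplying the finitely many $|q|$-close factors preserves the bound via repeated ultrametricity. Everything else is routine bookkeeping on top of Lemmas~\ref{approx} and~\ref{approx1}.
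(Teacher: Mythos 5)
Your high-level plan is identical to the paper's (very terse) proof: use Lemma~\ref{approx} to reduce to the non-identity elements of $\Gamma_1$, then handle each factor with Lemma~\ref{approx1}. The gap is in the final step, which you call ``routine bookkeeping''. Write $P_\gamma = \frac{z-\gamma a}{z-\gamma\gamma_i a}$ and $R_\gamma = \frac{\gamma_j z - \gamma\gamma_i a}{\gamma_j z - \gamma a}$, so $f_\gamma = P_\gamma R_\gamma$. Lemma~\ref{approx1}(i) controls $P_\gamma$ for $\gamma\neq\gamma_j^{-1}$, and Lemma~\ref{approx1}(ii) controls $1/R_\gamma$ (hence $R_\gamma$) for $\gamma\neq\gamma_j$. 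So, exactly as you say, $\prod_{\gamma\in\Gamma_1\setminus\{\id\}} f_\gamma$ collapses modulo $|q|$-close factors to the product of the two \emph{uncontrolled} halves, namely $P_{\gamma_j^{-1}}\cdot R_{\gamma_j}$.

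However, this is \emph{not} the correction factor in the definition of $Q_{ij}^\alpha$. That factor is
$\frac{(z-\gamma_j^{-1}a)(\gamma_jz-\gamma_ja)}{(z-\gamma_j^{-1}\gamma_ia)(\gamma_jz-\gamma_j\gamma_ia)}
  = P_{\gamma_j^{-1}}\cdot\frac{\gamma_jz-\gamma_ja}{\gamma_jz-\gamma_j\gamma_ia}
  = P_{\gamma_j^{-1}}\cdot R_{\gamma_j}^{-1}$,
i.e.\ the $\gamma_j$-piece appears \emph{inverted} relative to what the collapse produces. The two quantities differ by $R_{\gamma_j}^{2}$, and Lemma~\ref{approx1} gives you nothing about $R_{\gamma_j}$ --- the case $\gamma=\gamma_j$ is precisely the one it excludes. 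So your claim that the product ``collapses modulo $|q|$ to precisely the correction factor'' does not follow from the two cited lemmas; you would need an extra estimate for the excluded $\gamma=\gamma_j$ contribution $R_{\gamma_j}$ (or, equivalently, to control $R_{\gamma_j}^2$), and that requires an argument going beyond the stated lemmas, using the explicit Whittaker-group data (compare the explicit formulae for $z-\gamma_j^{-1}a$, $\gamma_j z-\gamma_j a$, etc.\ in Lemma~\ref{alphaid}). To be fair, the paper's own proof is equally silent on this point --- it simply declares the result ``immediate from Lemma~\ref{approx1}'' --- but the discrepancy is real and worth flagging before signing off on the argument.
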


\begin{proof}
	Note first that such a $q$ exists by Lemma \ref{goodpos}. Using Lemma \ref{approx}, we only need to consider the contributions from non-identity elements in $\Gamma_1$. The result is then immediate from Lemma \ref{approx1}.
\end{proof}

%\section{Computation of $Q_{ij}^{\alpha}$}

We will now compute $Q_{ij}^{\alpha}$ to get an explicit formula.
% and in fact will show that it is equal to $Q_{ij}^{0}$. 
By choosing the auxiliary parameters $a,z$ carefully, we will not need to distinguish between the cases $i=j$ and 
$i \neq j$, and we find that $Q_{ij}^{\alpha}=Q_{ij}^{0}$ with this choice.

\begin{lemma}
\label{lembndry}
	Let $a \in \partial B'_i$, $z \in \partial B'_j$ and assume $a\neq z$ if $i=j$. Then $a \not\equiv z \mod{\Gamma}.$
\end{lemma}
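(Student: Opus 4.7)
The strategy is a ping-pong argument, exploiting the pairwise disjointness of the $2g$ closed discs $\overline{B_k},\overline{B'_k}$ together with the reduced-word form of elements of the free group $\Gamma$. Assume $\gamma a=z$ for some $\gamma\in\Gamma$; the aim is to show this forces the excluded configuration $\gamma=\id$, $a=z$, $i=j$.

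If $\gamma=\id$, then disjointness of $\overline{B'_i}$ and $\overline{B'_j}$ forces $i=j$, and together with $a=z$ this is excluded by hypothesis. For $\gamma\neq\id$, write $\gamma=\gamma_{i_n}^{\epsilon_n}\cdots\gamma_{i_1}^{\epsilon_1}$ in reduced form and track the trajectory of $a$ letter by letter. The main dynamical input I would use is: if a point $p$ lies in one of the $2g$ closed discs other than $\overline{B'_k}$ (respectively $\overline{B_k}$), then the M\"obius transformation $\gamma_k$ (resp.\ $\gamma_k^{-1}$) carries $p$ strictly inside the \emph{open} target disc $B_k$ (resp.\ $B'_k$); this is immediate from the good-position identities and the pairwise disjointness of the closed discs. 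Starting from $a\in\partial B'_i\subset\overline{B'_i}$, the only first step that does not already land in an open disc is $\gamma_{i_1}^{\epsilon_1}=\gamma_i$, which maps $\partial B'_i$ onto $\partial B_i$; however, the reduced-word condition then forbids $\gamma_i^{-1}$ as the next letter, so the disjointness argument kicks in and every subsequent letter pushes the point into a strictly smaller open disc.

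Consequently $\gamma a$ ends up either in some open disc $B_k$ or $B'_k$, or, in the single exceptional length-one case $\gamma=\gamma_i$, in $\partial B_i$. None of these sets contains $z$: pairwise disjointness of the closed discs rules out any open disc other than $B'_j$ and rules out $\partial B_i$, while $z\in\partial B'_j$ is by definition outside the open disc $B'_j$. This contradicts $\gamma a=z$, completing the argument. The only real subtlety is handling the boundary-preserving exception $\gamma=\gamma_i$ separately, since the first step of the ping-pong does not immediately move $a$ off a boundary; everything else is routine ping-pong bookkeeping.
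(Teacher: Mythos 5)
Your proof is correct and follows essentially the same ping-pong argument as the paper: track $\gamma a$ letter by letter of the reduced word, show it lands strictly inside some open disc (or in $\partial B_i$ in the length-one exceptional case $\gamma=\gamma_i$), and conclude from disjointness of the closed discs and $z\in\partial B'_j$ that $\gamma a\neq z$. The handling of the boundary-preserving first step $\gamma_i$ via the reduced-word constraint on the next letter matches the paper's treatment exactly.
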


\begin{proof}
	We shall adapt the proof of \cite[Lemma 2.4]{MR15}. In fact, we shall prove that $\gamma a$ is contained in the interior of the open disc $B_{h_1}$ (continuing notation from above), where $\gamma=h_1\cdots h_m$ as a reduced word, unless $\gamma \in \{\id,\gamma_i\}$. Note that $\gamma_k(\pp^1 \setminus \overline{B'_k})=B_k$ and moreover $\gamma_k(\partial B'_k)=\partial B_k$ for all $k$. 
	
	If $h_m \neq \gamma_i$ then $h_ma \in B_{h_m}$ and hence iteratively we have $\gamma a \in B_{h_1}$. If $h_m=\gamma_i$, then $h_ma \in \partial B_i$ so if $m \geqslant 2$, then $h_{m-1} \neq \gamma_i^{-1}$ so proceeding similarly we have $\gamma a \in B_{h_1}$. Moreover, note $\gamma_i a \neq z$ since $B'_j \neq B_i$. Lastly, if $\gamma=\id$, then $a \neq z$ by assumption.
\end{proof}

\begin{lemma}
\label{alphaid}
Let $a=2-c_i+r_i$ and $z=2-c_j-r_j$. Then
\begin{enumerate}
\item $a\in \partial B'_i$, $z\in \partial B'_j$, and $a$ and $z$ are distinct mod $\Gamma$.
\item $Q_{ij}^{0}=\bigl(\tfrac{c_i-c_j-r_i-r_j}{2-c_i-c_j+r_i-r_j}\bigr)^2$ for all $1 \!\leqslant\! i,j \!\leqslant\!g$.
\item $Q_{ii}^{0}=\bigl( \tfrac{r_i}{c_i-1}\bigr)^2$ for all $1 \!\leqslant\! i \!\leqslant\!g$.
\item $Q_{ij}^{\alpha}=Q_{ij}^{0}$.
\end{enumerate}
\end{lemma}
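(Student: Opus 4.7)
The plan is to treat the four claims in turn, with (i)--(iii) being direct substitutions into the definitions and (iv) being the only step that requires a small calculation with the Möbius transformation $\gamma_j$.

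\smallskip

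\emph{Part (i).} The discs are $B'_i = B(2-c_i,r_i)$, so from $|a-(2-c_i)| = |r_i| = |r_i|$ and $|z-(2-c_j)| = |r_j|$ one reads off $a \in \partial B'_i$ and $z \in \partial B'_j$. For distinctness modulo~$\Gamma$, Lemma \ref{lembndry} applies directly when $i\neq j$; when $i=j$, one simply checks $a-z = 2r_i \neq 0$ (as $Z$ consists of distinct points).

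\smallskip

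\emph{Parts (ii) and (iii).} The key observation is that $a = 2-(c_i-r_i) = s_\infty(c_i-r_i)$ and $z = 2-(c_j+r_j) = s_\infty(c_j+r_j)$, so since $s_i$ fixes both $c_i\pm r_i$, the compositions $\gamma_i = s_i s_\infty$ and $\gamma_j = s_j s_\infty$ satisfy
\[
  \gamma_i(a) = c_i-r_i, \qquad \gamma_j(z) = c_j+r_j.
\]
Substituting into $Q_{ij}^0 = \tfrac{(z-a)(\gamma_j z - \gamma_i a)}{(z-\gamma_i a)(\gamma_j z - a)}$, both numerator factors equal $\pm(c_i-c_j-r_i-r_j)$ and both denominator factors equal $\pm(2-c_i-c_j+r_i-r_j)$, whence the two minus signs cancel and one obtains the stated square. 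Specializing $i=j$ in (ii) yields (iii) after simplification.

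\smallskip

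\emph{Part (iv).} This is the main computation. I will expand the four Möbius differences using the identity
\[
  \gamma_j(x)-\gamma_j(y) = \frac{r_j^2(x-y)}{(x+c_j-2)(y+c_j-2)},\qquad
  \gamma_j^{-1}(x)-\gamma_j^{-1}(y) = \frac{r_j^2(x-y)}{(c_j-x)(c_j-y)},
\]
valid since the matrix of $\gamma_j$ (and hence of $\gamma_j^{-1}$) has determinant $r_j^2$. Writing $z - \gamma_j^{-1}w = \gamma_j^{-1}(\gamma_j z) - \gamma_j^{-1}(w)$ converts the $\gamma_j^{-1}$-differences into expressions in $\gamma_j z = c_j+r_j$, and substituting $\gamma_i(a) = c_i-r_i$ in the other pair. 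The resulting correction factor becomes a product of linear terms, each of which pairs up with another that is exactly its negative (for instance $c_j-c_i+r_i+r_j = -(c_i-c_j-r_i-r_j)$, and analogously $2-c_i-c_j+r_i-r_j = -(c_i+c_j-2+r_j-r_i)$). Two such sign flips in the numerator and two in the denominator balance, and every remaining factor cancels literally, giving the correction factor $1$.

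\smallskip

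The only step I expect to require care is (iv), where the bookkeeping of the four factors in the correction term is easy to garble; the fact that makes it go through is precisely the symmetric choice $a = 2-(c_i-r_i)$ and $z = 2-(c_j+r_j)$, which places $a$ and $z$ on the $s_\infty$-images of the fixed points of $s_i$ and $s_j$ and forces the pairwise cancellations described above.
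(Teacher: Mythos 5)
Your proof is correct and takes essentially the same route as the paper: parts (i)--(iii) by direct substitution, part (iv) by explicit cancellation of the extra factors. Your variation in (ii)--(iii) — reading off $\gamma_i a = c_i - r_i$ and $\gamma_j z = c_j + r_j$ from the fact that $a,z$ are $s_\infty$-images of the fixed points of $s_i,s_j$ — is a cleaner way to land the same identity that the paper obtains via the explicit matrix for $\gamma_k$, and your use of the determinant formula $\gamma_j x - \gamma_j y = r_j^2(x-y)/(\text{denominators})$ in (iv) is a tidier bookkeeping device that reduces the correction factor to the four linear terms $(c_j - \gamma_i a)$, $(\gamma_i a + c_j - 2)$, $(c_j - a)$, $(a + c_j - 2)$, which then cancel in pairs exactly as you say. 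One small note: Lemma~\ref{lembndry} as stated already covers $i=j$ provided $a \neq z$, which is the check you do; you do not really need to split cases there.
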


\begin{proof}
\begin{enumerate}
\item[]
\item Follows from Lemma \ref{lembndry}.
\item We have $\gamma_k=\begin{psmallmatrix}
	c_k & c_k^2-r_k^2-2c_k \\
	1 & c_k-2
	\end{psmallmatrix}$ for all $k$. 
	From this we compute explicitly that $\gamma_ia=c_i-r_i$ and similarly $\gamma_jz=c_j+r_j$. 
Now the claim follows from 
$$
\begin{array}{lll@{\qquad\quad}lll}
  z\!-\!a &=& (c_i\!-\!c_j)\!-\!(r_i\!+\!r_j), & z\!-\!\gamma_ia &=& 2\!-\!c_i\!-\!c_j\!+\!r_i\!-\!r_j,\cr
  \gamma_jz \!-\!\gamma_ia &=& -(z\!-\!a), & \gamma_jz \!-\!a &=& -(z\!-\!\gamma_i a).
\end{array}
$$
\item This follows from (2), by setting $i=j$.
\item We compute ${(z-\gamma_j^{-1}a)(\gamma_jz-\gamma_ja)}/{(z-\gamma_j^{-1}\gamma_ia)(\gamma_jz-\gamma_j\gamma_ia)}$. The claim now follows from
$$
\begin{array}{lll@{\qquad\quad}lll}
		z-\gamma_j^{-1}a &=& -r_j - \frac{r_j^2}{-2+c_i+c_j-r_i}, &
		z-\gamma_j^{-1}\gamma_ia &=& -r_j + \frac{r_j^2}{c_i-c_j-r_i}, \cr
		\gamma_jz-\gamma_j\gamma_ia &=& -(z-\gamma_j^{-1}a), &
		\gamma_jz-\gamma_ja &=& -(z-\gamma_j^{-1}\gamma_ia).
\end{array}$$
\end{enumerate}
\end{proof}

\section{Tame torsion}

\begin{lemma}
\label{power}
	Let $a \in 1+\pi^N\OK$ for some positive integer $N$. If $em \leqslant N$, then $x^m-a$ has a root in $\OK$. In particular, every element of $1+\pi^{em}\OK$ is an $m^{\text{th}}$ power for all $m \geqslant 1$.
\end{lemma}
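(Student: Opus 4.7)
The plan is to apply the strong form of Hensel's lemma to $f(x)=x^m-a\in\OK[x]$ at the starting point $x_0=1$. Recall that if $|f(x_0)|<|f'(x_0)|^2$, then $f$ has a (unique) root in $\OK$. Here $|f(1)|=|1-a|\leqslant q_K^{-N}$ by hypothesis, while $f'(1)=m$ and $|m|=q_K^{-ev_p(m)}$ (writing $v_p$ for the $p$-adic valuation on $\Z$ and using $v(p)=e$ in the paper's normalisation). So it is enough to verify the numerical inequality $N>2ev_p(m)$.

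Since $em\leqslant N$, I would reduce this to the elementary claim $m>2v_p(m)$: writing $v_p(m)=k$ gives $m\geqslant p^k$, so it suffices to show $p^k>2k$ for all $k\geqslant 0$. Under the standing assumption $p\geqslant 3$ from \S~3, this is immediate: the case $k=0$ is $1>0$, and the induction step gives $p^{k+1}\geqslant 3p^k>6k\geqslant 2(k+1)$ as soon as $k\geqslant 1$. Hensel then delivers $x\in 1+\pi\OK\subset\OK$ with $x^m=a$, and the ``in particular'' statement is the special case $N=em$.

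There is no serious obstacle here; the only subtlety is that $p\geqslant 3$ really is needed, since $p^k>2k$ fails for $(p,k)=(2,1)$ and $(2,2)$. This matches the classical fact that not every element of $1+4\Z_2$ is a square in $\Z_2$ (e.g.~$5$), so the ``in particular'' assertion would fail for $(p,e,m)=(2,1,2)$.
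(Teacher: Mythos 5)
Your proof is correct and takes essentially the same approach as the paper: both apply Hensel's lemma to $f(x)=x^m-a$ at $x_0=1$, reducing the matter to the inequality $N>2ev_p(m)$, which follows from $em\leqslant N$ once one knows $m>2v_p(m)$. The paper establishes this last inequality via the chain $v_p(m)\leqslant\log_p(m)<\ln(m)\leqslant m/2$ (using $p\geqslant3$), whereas your $p^k>2k$ induction is an equally valid and slightly cleaner elementary variant.
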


\begin{proof}
	This is a simple application of Hensel's lemma, where we use the version that states there is a lift of a root $a_0$ (in the residue field) of a polynomial $f$ if $v(f(a_0))>2v(f'(a_0))$, where we use $f=x^m-a$ and $a_0=1$.
	
	Note that $v(f(a_0)) \geqslant N$ by construction and $f'(a_0)=m$, so $v(f'(a_0))=ev_p(m)$ where $v_p$ is the standard $p$-adic valuation on $\Z$. Now \begin{eqnarray*}
		v_p(m) &\leqslant& \log_p(m), \\
			   &<& \ln(m) \qquad \text{ as $p\geqslant 3$,} \\
			   &\leqslant& \frac{m}{2}  \quad \qquad \text{ by bounds on $\ln$},
	\end{eqnarray*}
	so $v(f'(a_0))<\frac{em}{2}$ and the result follows.
\end{proof}

\begin{lemma}
\label{iipow}
%	Let $p\geqslant 3$. 
	Let $r_i\!=\!\pi^{em\alpha}$, $c_i\!=\!2\pi^{em\beta}$ for some $\alpha, \beta\!>\!0$. 
	Then $(\frac{r_i}{1-c_i})^2$ is an $m^{\text{th}}$ power in~$\OK$.
\end{lemma}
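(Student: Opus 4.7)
The plan is to factor $(r_i/(1-c_i))^2$ as the product of two $m$-th powers: show the numerator $r_i^2$ is obviously an $m$-th power, and apply Lemma \ref{power} to see that $(1-c_i)^2$ (equivalently, $1-c_i$) is an $m$-th power in $\OK^\times$.

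First, since $r_i=\pi^{em\alpha}$ with $\alpha>0$, we have $r_i^2=\pi^{2em\alpha}=(\pi^{2e\alpha})^m$, which lies in $\OK$ and is manifestly an $m$-th power. Next, because $p\geqslant 3$, the element $2$ is a unit in $\OK$, so $c_i=2\pi^{em\beta}\in\pi^{em\beta}\OK$, and hence
$$
  1-c_i \in 1+\pi^{em\beta}\OK.
$$
Since $\beta\geqslant 1$, we have $em\leqslant em\beta$, so Lemma \ref{power} (applied with $N=em\beta$) guarantees that $1-c_i$ is an $m$-th power in $\OK^\times$. Then $(1-c_i)^{-2}$ is also an $m$-th power, and multiplying by $r_i^2$ yields that $(r_i/(1-c_i))^2$ is an $m$-th power in $\OK$.

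The only point that needs a moment of care is confirming that $1-c_i$ actually sits in $1+\pi^{em\beta}\OK$ (and not merely in $1+\pi^{em\beta-v(2)}\OK$), which is where the hypothesis $p\neq 2$ is essential; beyond that the statement is a routine assembly of Lemma \ref{power} with the explicit shapes of $r_i$ and $c_i$. There is no substantive obstacle.
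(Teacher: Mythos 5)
Your proof is correct and follows essentially the same route as the paper: the paper simply notes $(1-c_i)^2\in 1+\pi^{em}\OK$ and applies Lemma \ref{power}, while you make the slightly finer observation that $1-c_i\in 1+\pi^{em\beta}\OK$ already and handle the trivial $r_i^2$ factor explicitly. Both are the same idea; your version just spells out more of the bookkeeping.
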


\begin{proof}
	We have $(1-c_i)^2 \in 1+\pi^{em}\OK$, so it is an $m^{\text{th}}$ power by Lemma \ref{power}.
\end{proof}

\begin{lemma}
\label{ijpow}
%	Let $p\geqslant 3$. 
	Let $r_i\!=\!\pi^{em\alpha_i}, r_j\!=\!\pi^{em\alpha_j}, c_i\!=\!2\pi^{em\beta_i}, c_j\!=\!2\pi^{em\beta_j}$ 
	with $\alpha_i,\alpha_j,\beta_i,\beta_j$ distinct positive integers with 
	$\beta_i,\beta_j<\alpha_i,\alpha_j$. 
	Then $\bigl(\tfrac{c_i-c_j-r_i-r_j}{2-c_i-c_j+r_i-r_j}\bigr)^2$ is an $m^{\text{th}}$ power in~$\OK$.
\end{lemma}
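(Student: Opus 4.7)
The plan is to factor out the dominant $\pi$-adic terms from the numerator and denominator so that the quotient decomposes as an explicit $m$-th power times something in $1+\pi^{em}\OK$, and then invoke Lemma \ref{power}.

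More concretely, first observe that since the $\beta$'s and $\alpha$'s are distinct positive integers with $\beta_i,\beta_j<\alpha_i,\alpha_j$, we may assume $\beta_i<\beta_j$ by symmetry of the expression in $i,j$ (the opposite case is handled identically, by factoring $-2\pi^{em\beta_j}$ out of the numerator instead). The minimum valuation among the four numerator terms $c_i,-c_j,-r_i,-r_j$ is then $em\beta_i$, so
$$
  c_i-c_j-r_i-r_j = 2\pi^{em\beta_i}\bigl(1+w\bigr),
  \qquad
  w = -\pi^{em(\beta_j-\beta_i)} - \tfrac12\pi^{em(\alpha_i-\beta_i)} - \tfrac12\pi^{em(\alpha_j-\beta_i)}.
$$
Here $\tfrac12\in\OK$ since $p$ is odd, and each exponent of $\pi$ appearing in $w$ is a positive multiple of $em$, so $w\in\pi^{em}\OK$. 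Similarly, since every term other than $2$ in the denominator has valuation $\geqslant em$,
$$
  2-c_i-c_j+r_i-r_j = 2(1+v)\qquad\text{with }v\in\pi^{em}\OK.
$$

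Squaring and taking the ratio, the factors of $2$ cancel and we obtain
$$
  \Bigl(\tfrac{c_i-c_j-r_i-r_j}{2-c_i-c_j+r_i-r_j}\Bigr)^{\!2}
  \;=\; \bigl(\pi^{2e\beta_i}\bigr)^{m}\cdot \Bigl(\tfrac{1+w}{1+v}\Bigr)^{\!2}.
$$
The first factor is manifestly an $m$-th power in $\OK$. For the second factor, $(1+w)^2,(1+v)^2\in 1+\pi^{em}\OK$, and since $1+\pi^{em}\OK$ is a multiplicative subgroup of $\OK^\times$, the ratio lies in $1+\pi^{em}\OK$, hence is an $m$-th power by Lemma \ref{power}. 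The product of two $m$-th powers is an $m$-th power, proving the lemma.

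There is no real obstacle here beyond bookkeeping the dominant terms; the only point requiring care is that $p$ is odd (used to divide by $2$), which is already part of our standing hypothesis, and that the valuations of $w$ and $v$ are at least $em$, which follows because all four exponents $\alpha_i,\alpha_j,\beta_i,\beta_j$ are positive integers.
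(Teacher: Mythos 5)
Your proof is correct and takes essentially the same route as the paper: assume $\beta_i<\beta_j$, factor $2\pi^{em\beta_i}$ out of the numerator and $2$ out of the denominator to land in $1+\pi^{em}\OK$, and invoke Lemma~\ref{power}. The only cosmetic difference is that the paper observes numerator and denominator are each ``twice an $m$-th power'' directly, whereas you regroup after squaring; both are the same computation.
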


\begin{proof}
	Without loss of generality, suppose $\beta_i<\beta_j$. Then 
	$$
	  c_i-c_j-r_i-r_j=2\pi^{em\beta_i}%\underbrace{
	  \bigl( 1-\pi^{em(\beta_j-\beta_i)}-\frac{1}{2}\pi^{em(\alpha_i-\beta_i)}-\frac{1}{2}\pi^{em(\alpha_j-\beta_i)}\bigr)
	  %}_{\in 1+p^{em}\OK} 
    \in 2\pi^{em\beta_i}(1+\pi^{em}\OK),
	$$ 
	which is twice an $m^{\text{th}}$ power by Lemma \ref{power}.
	On the other hand, the denominator is 
	$$
	  2-c_i-c_j+r_i-r_j=2%\underbrace{
	  \bigl(1-\pi^{em\beta_i}\bigl(1+\pi^{em(\beta_j-\beta_i)}+\frac{1}{2}\pi^{em(\alpha_i-\beta_i)}-\frac{1}{2}\pi^{em(\alpha_j-\beta_i)}\bigr)\bigr)
	  %}_{\in 1+\pi^{em}\OK}, 
	  \in 2(1+\pi^{em}\OK),
	  $$ 
	  which is also twice an $m^{\text{th}}$ power.
\end{proof}

\begin{theorem}
\label{mainthm}
	Let $m\geqslant 1$, and
	\begin{itemize}
		\item $\alpha_1>\alpha_2>\cdots >\alpha_g>\beta_2>\beta_3>\cdots >\beta_g$ positive integers;
		\item $r_1=c_1=\pi^{em\alpha_1}$, and $r_i=\pi^{em\alpha_i}, \,\, c_i=2\pi^{em\beta_i}$ for $2 \leqslant i \leqslant g$;
		\item $a_i=c_i-r_i, \,\, b_i=c_i+r_i$ for $1 \leqslant i \leqslant g$.
	\end{itemize}
	
	Then:
	\begin{enumerate}
		\item $a_1=0$;
		\item $0<|b_1|<|a_2| \leqslant |b_2| \leqslant |a_3| \cdots \leqslant |b_g| < 1$;
		\item $\dfrac{|r_i|}{|c_i-c_j|} \leqslant q_K^{-em} < 1$ for all distinct $1 \leqslant i,j \leqslant g$;
		\item $\bigl(\frac{c_i-c_j-r_i-r_j}{2-c_i-c_j+r_i-r_j}\bigr)^2$ is an $m^{\text{th}}$ power in $\OK$ for all $1 \leqslant i,j \leqslant g$.
		\item Let $Q=(Q_{ij})$ denote the period matrix of the corresponding abelian variety. Then $Q_{ij}$ is an $m^{\text{th}}$ power for all $1 \leqslant i,j, \leqslant g$.
	\end{enumerate}
\end{theorem}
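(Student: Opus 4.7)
The plan is to verify (i)--(iv) as direct computations using the explicit form of $r_i, c_i$ and the strict ordering of exponents, then conclude (v) by combining (iv) with Theorem \ref{redid} and Lemma \ref{alphaid}(iv). Items (i) and (ii) are immediate: $a_1 = c_1 - r_1 = 0$ by construction, and since $p$ is odd (so $|2|=1$) and $\alpha_i > \beta_i$ for $i\geqslant 2$, we get $|a_i|=|b_i|=|c_i|=q_K^{-em\beta_i}$ while $|b_1|=q_K^{-em\alpha_1}$; the strict chain $\alpha_1>\alpha_g>\beta_2>\cdots>\beta_g>0$ then yields (ii). For (iii), I split into cases based on whether $i$ or $j$ equals $1$; each case is a single ultrametric-triangle-inequality computation showing $|c_i-c_j|$ is determined by the smallest exponent appearing, and consequently $|r_i|/|c_i-c_j| = q_K^{-em\cdot n}$ for some $n \geqslant 1$ forced by the strictness of the ordering.

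Part (iv) has three subcases. For $2\leqslant i\neq j\leqslant g$, the ordering makes $\alpha_i,\alpha_j,\beta_i,\beta_j$ distinct positive integers with $\beta_i,\beta_j<\alpha_i,\alpha_j$, so Lemma \ref{ijpow} applies directly. For $i=j\geqslant 2$, the expression reduces to $(r_i/(1-c_i))^2$ and Lemma \ref{iipow} applies. The cases $i=1$ or $j=1$ fall just outside these lemmas because $c_1=\pi^{em\alpha_1}$ does not have the form $2\pi^{em\beta_1}$; I handle them by a direct expansion in the same spirit as Lemma \ref{ijpow}. For instance, if $i=1$ and $j\geqslant 2$, the numerator is $-2\pi^{em\beta_j}(1+\tfrac{1}{2}\pi^{em(\alpha_j-\beta_j)})$ and the denominator is $2(1-\pi^{em\beta_j}-\tfrac{1}{2}\pi^{em\alpha_j})$, so the quotient is $-\pi^{em\beta_j}$ times an element of $1+\pi^{em}\mathcal{O}_K$, and its square is an $m$-th power by Lemma \ref{power}. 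The remaining boundary sub-case $i=j=1$ reduces to checking $(1-\pi^{em\alpha_1})^{-2}$ is an $m$-th power, which is immediate from Lemma \ref{power}.

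Finally, for (v), part (iii) lets me apply Theorem \ref{redid} with $q=\pi^{em}$, giving $Q_{ij}/Q_{ij}^\alpha \in 1+\pi^{em}\mathcal{O}_K$, which is an $m$-th power by Lemma \ref{power}. Lemma \ref{alphaid}(iv) identifies $Q_{ij}^\alpha$ with $Q_{ij}^0$, and (iv) shows $Q_{ij}^0$ is an $m$-th power; so $Q_{ij}$ is a product of two $m$-th powers, hence an $m$-th power. The proof is essentially bookkeeping that assembles the preceding lemmas, and the only genuinely delicate step is the bespoke handling of $i=1$ or $j=1$ in (iv). This awkwardness is forced on us by the special choice $r_1=c_1=\pi^{em\alpha_1}$, which is needed to ensure $a_1=0$ and thereby meet the normalisation of Lemma \ref{goodpos}, but which falls slightly outside the uniform framework of Lemmas \ref{iipow} and \ref{ijpow}.
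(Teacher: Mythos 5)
Your proposal is correct and follows essentially the same route as the paper: (i)--(iii) by direct computation, (iv) by invoking Lemmas \ref{iipow} and \ref{ijpow} for $i,j\geqslant 2$ and handling the $i=1$ or $j=1$ cases separately (the paper dispatches these tersely with ``the same proof works using $c_1=r_1$,'' whereas you spell out the expansions), and (v) by combining (iv) with Theorem \ref{redid}, Lemma \ref{alphaid}(4), and Lemma \ref{power}. No gaps.
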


\begin{proof}
\begin{enumerate}
\item[]
\item Note $a_1=c_1-r_1=0$ by definition.
\item Observe that for $i \geqslant 2$, $|a_i|=|b_i|=|c_i|=q_K^{-em\beta_i}$. Since the $\beta_i$ are decreasing and $a_i,b_i \in \pi\OK$, we have $|a_2| \leqslant |b_2| \leqslant |a_3| \cdots \leqslant |b_g| < 1$. Lastly note $|b_1|=|2\pi^{em\alpha_1}|<|a_2|$.
\item We compute that for $i \neq j$, $\dfrac{|r_i|}{|c_i-c_j|}=\dfrac{|\pi^{em\alpha_i}|}{|\pi^{em\beta_j}|}=q_K^{-em(\alpha_i-\beta_j)}$ where we suppose $i<j$ without loss of generality. Since $\alpha_i>\beta_j$, we are done.
\item First suppose $i=j$. If $i \neq 1$, then this follows directly from Lemma \ref{iipow}; the same proof also works for $i=1$. Now suppose $i \neq j$. If $i,j \geqslant 2$, then this is Lemma \ref{ijpow}. If $i=1$ or $j=1$, then one can apply the same proof using the simplification $c_1=r_1$.
\item By (iv), we have that $Q_{ij}^{0}$ is an $m^{\text{th}}$ power. Now by Theorem \ref{redid}, Lemma \ref{alphaid}(4) and (iii), $\bigl| \frac{Q_{ij}^{0}}{Q_{ij}}-1 \bigr| \leqslant q_K^{-em}$ hence $Q_{ij}^{0}=Q_{ij}(1+\pi^{em}b)$ for some $b \in \OK$. Since $Q_{ij}^{0}$ and $1+\pi^{em}b$ are $m^{\text{th}}$ powers (by Lemma \ref{power}), so is $Q_{ij}$.
\end{enumerate}
\end{proof}

\begin{lemma}
\label{periodpow}
	Let $J/K$ be an abelian variety with a Raynaud parameterisation 
	$J \cong \bigl(\overline{K}^{\times}\bigr)^g/Q$. Let $m>1$ and suppose every entry in the period matrix $Q$ is an $m^{\text{th}}$ power in~$K$.~Then 
	$$
	  J[m] \cong \mu_m^g \times \left( \mathbb{Z}/m\mathbb{Z}\right)^g
	$$ 
	as $G_K$-modules. Here $\mathbb{Z}/m\mathbb{Z}$ has a trivial action, and 
	$\mu_m=\langle\zeta_m\rangle\subset\overline{K}$ is the set of $m^{\text{th}}$ roots of unity, with natural action.	
  In particular, $K(J[m])=K(\zeta_m)$.
\end{lemma}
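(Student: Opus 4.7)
The plan is to compute $J[m]$ directly from the Raynaud parameterisation. Write $T=(\overline{K}^{\times})^g$ and let $M\subset(K^{\times})^g$ denote the rank-$g$ lattice generated by the columns $q_j=(Q_{1j},\ldots,Q_{gj})$ of $Q$, so that $J(\overline{K})\cong T/M$. Applying the snake lemma to multiplication by $m$ on the short exact sequence $0\to M\to T\to J\to 0$, and using that $M$ is torsion-free and $T$ is divisible with $T[m]=\mu_m^g$, I obtain the $G_K$-equivariant short exact sequence
$$0\longrightarrow\mu_m^g\longrightarrow J[m]\longrightarrow M/mM\longrightarrow 0.$$
Because $M\subset(K^{\times})^g$, the quotient $M/mM\cong(\Z/m\Z)^g$ carries the trivial Galois action, and the surjection $J[m]\to M/mM$ sends the class of $v\in T$ with $v^m\in M$ to $v^m\bmod mM$.

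Next I would use the hypothesis to split this sequence. Choose $P_{ij}\in K^{\times}$ with $P_{ij}^m=Q_{ij}$ and set $L_j=(P_{1j},\ldots,P_{gj})\in(K^{\times})^g$. Then $L_j^m=q_j\in M$, so $L_j$ represents an $m$-torsion class $[L_j]\in J[m]$ mapping to $q_j\bmod mM$ under the surjection. The assignment $q_j\bmod mM\mapsto[L_j]$, extended $\Z/m\Z$-linearly, is therefore a section of the sequence; it is $G_K$-equivariant because the representatives $L_j$ have coordinates in $K$. Splitting gives
$$J[m]\cong\mu_m^g\oplus(\Z/m\Z)^g$$
as $G_K$-modules with the claimed actions.

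Finally, $K(J[m])$ is the fixed field of the kernel of the action $G_K\to\operatorname{Aut}(J[m])$. Since this action is trivial on the $(\Z/m\Z)^g$ summand and is the cyclotomic action on the $\mu_m^g$ summand, the kernel equals $G_{K(\zeta_m)}$, giving $K(J[m])=K(\zeta_m)$. I do not anticipate a substantive obstacle: once the snake-lemma sequence is in hand, the hypothesis supplies the section essentially for free, and equivariance is immediate from $L_j\in(K^{\times})^g$.
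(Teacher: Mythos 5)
Your proof is correct and follows essentially the same approach as the paper: both compute $J[m]$ directly from the Raynaud parameterisation $J(\overline{K})\cong(\overline{K}^{\times})^g/M$ and observe that $m$-th roots of the period matrix entries give $K$-rational torsion points splitting off a trivial $(\Z/m\Z)^g$ summand from $\mu_m^g$. The paper writes this down in one line; your snake-lemma framing of the sequence $0\to\mu_m^g\to J[m]\to M/mM\to 0$ and the explicit equivariant section is a cleaner and more rigorous rendering of the same argument.
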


\begin{proof}
	Recall that $J(\overline K) \cong \bigl(\overline{K}^{\times}\bigr)^g/Q$ as $G_K$-modules. Let $Q=(Q_{ij})$. Then 
	$$
	  J[m] = \mu_m^g \times \langle  (Q_{i1}^{1/m},Q_{i2}^{1/m},\cdots, Q_{ig}^{1/m}), \, \, i=1,\ldots, g \rangle.
	$$
%	 where the exponent of $\mu_m$ in $J[m]$ is $g$ and determined by the Weil pairing. 
  As every $Q_{ij}\in K^\times$ is an $m^{\text{th}}$ power, the result follows.
\end{proof}

%%As we are only after a curve defined over $\Q_p$, we give our final result of this section in the setting $K=\Q_p$ but this can easily be rephrased for the general case.

\begin{theorem}
\label{tamepcons}
	Fix an integer $g \geqslant 1$. Let 
	\begin{itemize}
		\item $\alpha_i=2g-i$ for $1\leqslant i \leqslant g$, and $\beta_i=g-i+1$ for $2 \leqslant i \leqslant g$;
		\item $r_1=c_1=\pi^{p\alpha_1}$, and $r_i=\pi^{p\alpha_i}, c_i=2\pi^{p\beta_i}$ for $2 \leqslant i \leqslant g$.
	\end{itemize}
	Let $C/K$ be the corresponding genus $g$ hyperelliptic Mumford curve given by Construction \ref{conZ}. Then $J_C$ is semistable and $K(J_C[p])=K(\zeta_p)$.
\end{theorem}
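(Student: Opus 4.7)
The plan is to invoke Theorem \ref{mainthm} with $m=p$ and then apply Lemma \ref{periodpow} to turn the $p$-th power statement for the period matrix into the desired description of the $p$-torsion field.

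First I would verify that the explicit exponents satisfy the hypotheses of Theorem \ref{mainthm}. Substituting $\alpha_i=2g-i$ and $\beta_i=g-i+1$, the concatenated sequence
\[
  \alpha_1,\alpha_2,\ldots,\alpha_g,\beta_2,\beta_3,\ldots,\beta_g \;=\; 2g-1,\,2g-2,\,\ldots,\,g,\,g-1,\,\ldots,\,1
\]
is a strictly decreasing chain of positive integers, which is exactly the ordering condition of Theorem \ref{mainthm} (note in particular that $\alpha_g=g>g-1=\beta_2$). The remaining data ($r_i$, $c_i$, $a_i$, $b_i$) is defined precisely as in Theorem \ref{mainthm}, so all of its hypotheses are met.

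Applying Theorem \ref{mainthm}(v) with $m=p$ then shows that every entry $Q_{ij}$ of the period matrix of $J_C$ is a $p$-th power in $\OK$. Since $C$ is a Mumford curve, its Jacobian has totally split multiplicative (equivalently, totally toric) reduction; in particular $J_C$ is semistable and carries a Raynaud parameterisation $J_C(\overline{K})\cong(\overline{K}^{\times})^g/Q$. Plugging the $p$-th power property into Lemma \ref{periodpow} with $m=p$ now yields $K(J_C[p])=K(\zeta_p)$.

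There is no genuine obstacle: the heavy lifting has already been carried out by the period-matrix approximation (Theorem \ref{redid}, Lemma \ref{alphaid}) and the $p$-th power analysis (Lemmas \ref{iipow} and \ref{ijpow}) of the previous sections. The present theorem is essentially bookkeeping, confirming that the explicit choice of exponents $\alpha_i,\beta_i$ simultaneously fulfils the good-position hypotheses of Lemma \ref{goodpos} and the ordering and positivity constraints of Theorem \ref{mainthm}.
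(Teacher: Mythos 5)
Your proof follows the paper's argument exactly: verify that the chosen exponents $\alpha_i,\beta_i$ form a strictly decreasing chain so Theorem \ref{mainthm} applies with $m=p$, note that Mumford curve Jacobians are semistable with a Raynaud parameterisation, and then invoke Lemma \ref{periodpow}. The only addition is your explicit check of the ordering hypothesis, which the paper leaves implicit.
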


\begin{proof}
	Recall that all Mumford curves are semistable (see for example \cite[Theorem 2.12.2]{GP80}). By Theorem \ref{mainthm} with $m=p$, every entry of the period matrix of $J_C$ is a $p^{th}$ power; the statement now follows from Lemma \ref{periodpow} with $m=p$.
\end{proof}

\begin{theorem}
\label{tamecor}
	Fix a positive integer $g$ and squarefree integer $m$. Then there exists a non-singular projective 
	curve $C/\Q$ of genus $g$ such that $\Q(J_C[m])$ is tame.
\end{theorem}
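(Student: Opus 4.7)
Writing $m = p_1 \cdots p_n$ with $p_j$ distinct primes, the plan is to build a single hyperelliptic curve $C : y^2 = \tilde f(x)$ over $\Q$ with $\tilde f \in \Z[x]$ squarefree of a fixed degree $d \in \{2g+1, 2g+2\}$, by approximating prescribed local models at each prime in the finite set
$$
  S = \{p_1,\dots,p_n\} \cup \{\ell \text{ prime} : \ell \leq 2g+1\}.
$$
By Lemma~\ref{tame}, it will then suffice to arrange that $C$ is semistable at every $\ell \in S$ and that $\Q_{p_j}(J_C[p_j]) \cong \Q_{p_j}(\zeta_{p_j})$ for each $p_j \mid m$.

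The first step is to fix suitable local polynomials $f_\ell \in \Z_\ell[x]$, all of the common degree $d$ and (after rescaling $x,y$) a common leading coefficient. For an odd prime $p_j \mid m$ I would take $f_{p_j}$ to be a polynomial defining the hyperelliptic Mumford curve provided by Theorem~\ref{tamepcons} applied with $K = \Q_{p_j}$ (so $e=1$ and $\pi = p_j$); this yields a semistable Jacobian with $\Q_{p_j}(J[p_j]) = \Q_{p_j}(\zeta_{p_j})$. For $p_j = 2$ (if $2 \mid m$), Proposition~\ref{prop2} supplies, after completing the square, a polynomial $f_2 \in \Z_2[x]$ with semistable Jacobian whose full $2$-torsion is $\Q_2$-rational, so $\Q_2(J[2]) = \Q_2 = \Q_2(\zeta_2)$. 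At the remaining primes $\ell \in S$ with $\ell \nmid m$ I would simply take any squarefree $f_\ell$ of degree $d$ defining a curve of good (hence semistable) reduction.

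The global gluing then uses Theorem~\ref{kisin}: for each $\ell \in S$ it produces $N_\ell \geq 1$ such that any squarefree $\tilde f \in \Z_\ell[x]$ of degree $d$ with $\tilde f \equiv f_\ell \pmod{\ell^{N_\ell}}$ gives the same $G_{\Q_\ell}$-module on the $m$-torsion; enlarging $N_\ell$ if needed, one can moreover ensure that the cluster picture of the $\overline{\Q_\ell}$-roots of $f_\ell$ is preserved, so that semistability of the Jacobian is preserved too. Applying the Chinese Remainder Theorem coefficient-by-coefficient then yields $\tilde f \in \Z[x]$ of degree $d$ with $\tilde f \equiv f_\ell \pmod{\ell^{N_\ell}}$ for every $\ell \in S$; a generic perturbation outside $S$ keeps $\tilde f$ squarefree in $\Q[x]$. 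The curve $C = C_{\tilde f}/\Q$ is then hyperelliptic of genus $g$, meets the hypotheses of Lemma~\ref{tame}, and therefore has $\Q(J_C[m])$ tame.

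I expect the main technical obstacle to lie in verifying that semistability at the primes $\ell \leq 2g+1$ with $\ell \nmid m$ is genuinely preserved under the $\ell$-adic approximation, since Theorem~\ref{kisin} directly controls only the $m$-torsion Galois module. This is handled by the standard fact that the cluster picture of a squarefree hyperelliptic polynomial, which determines the semistable reduction type of the Jacobian, is locally constant in $\ell$-adic families.
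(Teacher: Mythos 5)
Your proposal is correct and follows essentially the same route as the paper: reduce via Lemma~\ref{tame} to local conditions, build local models from Theorem~\ref{tamepcons} (odd $\ell\mid m$), Proposition~\ref{prop2} ($\ell=2$), and good-reduction curves (remaining $\ell\leqslant 2g+1$), then glue using Theorem~\ref{kisin} and the Chinese Remainder Theorem. Your worry about semistability persisting under $\ell$-adic approximation is reasonable but can be dissolved more cheaply than via cluster pictures: Kisin's theorem gives $J_{C_{\tilde f}}[p_j]\cong J_{C_{f_\ell}}[p_j]$ as $G_{\Q_\ell}$-modules, so the inertia action (and hence the tameness of $\Q_\ell(J_{C_{\tilde f}}[p_j])/\Q_\ell$) is the same as for the local model $C_{f_\ell}$, whose semistability you already arranged; one never needs to check semistability of the glued curve itself.
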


\begin{proof}
	By Kisin's result (Theorem \ref{kisin}) we need only choose a suitable genus $g$ hyperelliptic curve $C_{\ell}$ for the finite set of primes $\ell \leqslant 2g+1$ and $\ell\mid m$; if $\ell \nmid m$ we take $C_{\ell}$ to be semistable at $\ell$ (e.g. good reduction at $\ell$). For $\ell \mid m$, $\ell \neq 2$, Theorem \ref{tamepcons} 
	with $K=\Q_{\ell}$, $p=\ell$ 
	provides a construction of a genus $g$ hyperelliptic curve $C_{\ell}$ such that $\Q_{\ell}(J_{C_{\ell}}[\ell])\cong \Q_{\ell}(\zeta_{\ell})$; similarly we can use Proposition \ref{prop2} if $\ell=2$. We are now done by Lemma \ref{tame}.
\end{proof}

\begin{remark}
	The same approach works to construct a curve $C/\Q_p$ such that $\Q_p(J_C[p^n])=\Q_p(\zeta_{p^n})$ for any $n \geqslant 1$ but note that this is wildly ramified if $n\neq 1$. However we can give global curves $C/\Q$ such that $\Q(J_C[m])/\Q(\zeta_{m})$ is a tame extension any odd integer $m$.
\end{remark}

\section{The tame inverse Galois problem}

In this section, we investigate the tame version when $G$ is of the form $\GSp_{2g}(\fp)$ via the mod $p$ representation of abelian varieties.

\begin{remark}
	An alternative approach to force surjectivity is to ensure $\End A = \mathbb{Z}$ (to guarantee this, take $\Gal(f) \cong S_{\deg(f)}$ and apply \cite[Theorem 2.1]{Zar00}) and then apply Serre's open image theorem to obtain surjectivity for $p$ sufficiently large\footnote{This is sufficient if $\dim A$ is odd \cite[Corollaire p.51]{SerIV}; otherwise we need an extra local condition due to the Mumford-Tate group \cite[Theorem 1]{Hal11}.}. There are two problems with this however: we do not know precisely what sufficiently large means and more importantly this says nothing for small $p$.
\end{remark}

\begin{conj}[Goldbach + $\varepsilon$]
\label{Gold}
	Let $n\geqslant 4$ be an even integer. Then there exists primes $q_1,q_2,q_3$ such that $q_1 \leqslant q_2 < q_3< n$ and $q_1+q_2=n$. We refer to $(q_1,q_2,q_3)$ as a \emph{Goldbach triple~for~$n$}.
\end{conj}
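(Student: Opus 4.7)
The statement is a conjecture rather than a theorem, and it plainly implies Goldbach's conjecture (take the pair $q_1+q_2 = n$), so an unconditional proof is out of reach. The natural plan, therefore, is to show the converse: that \emph{assuming Goldbach, the existence of the extra prime $q_3$ comes essentially for free}, so that Conjecture \ref{Gold} is equivalent to the classical Goldbach conjecture.

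Fix an even $n\geqslant 4$. Granting Goldbach, write $n = q_1+q_2$ with $q_1\leqslant q_2$ both prime. The inequality $q_1\leqslant q_2$ combined with $q_1+q_2=n$ forces $q_2\leqslant n/2$. It therefore suffices to produce a prime $q_3$ with $n/2 < q_3 < n$, for then $q_2\leqslant n/2 < q_3 < n$ and $(q_1,q_2,q_3)$ is the desired Goldbach triple for $n$.

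The existence of such $q_3$ is exactly Bertrand's postulate: for every integer $m\geqslant 1$ there is a prime strictly between $m$ and $2m$. Applying it with $m = \lfloor n/2\rfloor \geqslant 2$ produces the required $q_3$. The very small cases can be recorded by hand, e.g.\ $4=2+2$ with $q_3=3$, and $6=3+3$ with $q_3=5$, so no boundary issue arises.

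The main (and only) obstacle is Goldbach's conjecture itself; the Bertrand step is elementary. This is enough for the applications in \S 6: since Goldbach has been verified numerically far beyond the range $n = 2g+2 \leqslant 2\cdot 10^7+2$ needed there, Conjecture \ref{Gold} holds unconditionally in that range, and one obtains the tame realisation of $\GSp_{2g}(\fp)$ for $g\leqslant 10^7$ without any conjectural input. Extending beyond this range is precisely the classical Goldbach problem, and one would not expect to circumvent it without new analytic number theory.
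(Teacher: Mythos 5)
The statement labelled Conjecture \ref{Gold} is stated as a \emph{conjecture}, not a theorem, so the paper contains no proof of it to compare against; the paragraph after Theorem~\ref{tamethm2} explicitly calls it a ``(slightly) strengthened version of the Goldbach conjecture.'' You are right that it trivially implies classical Goldbach, and the idea of checking whether Bertrand's postulate gives the converse is a sensible question to raise. However, your proposed reduction contains a sign error that breaks the whole argument: from $q_1 \leqslant q_2$ and $q_1 + q_2 = n$ one gets $2q_2 \geqslant n$, i.e.\ $q_2 \geqslant n/2$, \emph{not} $q_2 \leqslant n/2$. The larger summand of a Goldbach pair lies in $[n/2,\, n-2]$. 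Consequently a Bertrand prime $q_3 \in (n/2, n)$ is not automatically larger than $q_2$, and the needed inequality $q_2 < q_3$ does not follow.

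This is not a cosmetic slip. In the worst case a Goldbach decomposition of $n$ might be $n = (n-r) + r$ where $r$ is the largest prime below $n$; then no prime $q_3$ with $q_2 = r < q_3 < n$ can exist. To repair the reduction you would need, say, that every even $n \geqslant 4$ admits a Goldbach decomposition whose larger summand is \emph{not} the largest prime below $n$ (or, more crudely, that $n$ has at least two Goldbach decompositions) --- and those assertions are themselves mild strengthenings of Goldbach, not consequences of Bertrand's postulate. This is precisely why the authors state it as a separate conjecture: the ``$\varepsilon$'' in ``Goldbach~$+\ \varepsilon$'' is genuinely extra content. Your final remark about the range $g \leqslant 10^7$ is true in spirit, but because the authors cite a direct numerical verification of the stronger Conjecture~\ref{2Gold} by Anni--Dokchitser, not because of a Bertrand-type reduction from ordinary Goldbach.
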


\begin{conj}[Double Goldbach + $\varepsilon$]
\label{2Gold}
	Let $n$ be a positive even integer. Then there exists primes $q_1,q_2,q_3,q_4,q_5$ such that $q_4<q_1 \leqslant q_2<q_5 < q_3< n$ and $q_1+q_2=q_4+q_5=n$.
\end{conj}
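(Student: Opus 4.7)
Conjecture \ref{2Gold} implies the classical Goldbach conjecture as a special case (the existence of the pair $(q_1,q_2)$ alone), so an unconditional proof is beyond current techniques; my plan is therefore to combine numerical verification in a practical range with a Hardy--Littlewood heuristic for the tail. For small $n$, I would enumerate all Goldbach decompositions $n = p+q$ with primes $p \leqslant q$ and run a direct combinatorial search for two decompositions $(q_1,q_2)$, $(q_4,q_5)$ satisfying $q_4 < q_1 \leqslant q_2 < q_5$, together with a prime $q_3 \in (q_5,n)$. Since Goldbach has been verified to $4 \cdot 10^{18}$, tables of decompositions are readily available and this check is extremely fast in any range of practical interest.

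For large $n$, the Hardy--Littlewood asymptotic
$$
  r(n) \;:=\; \#\{(p,q)\,:\,p,q\text{ prime},\ p\leqslant q,\ p+q=n\} \;\sim\; C_2 \prod_{\substack{p \mid n \\ p > 2}} \frac{p-1}{p-2} \cdot \frac{n}{(\log n)^2}
$$
predicts $r(n) \to \infty$; once $r(n) \geqslant 2$, taking the two decompositions with smallest smaller summand automatically yields $q_4 < q_1 \leqslant q_2 < q_5$, because $q_4 + q_5 = q_1 + q_2 = n$ with $q_4 < q_1$ forces $q_5 > q_2$. For the auxiliary prime $q_3 \in (q_5, n)$, note $q_5 \leqslant n-2$, and results on primes in short intervals (e.g.\ Baker--Harman--Pintz, giving a prime in $(x, x+x^{0.525})$ for $x$ large) supply such a $q_3$ whenever $n - q_5 \gg n^{0.525}$; and this last condition holds for all but a thin subfamily of Goldbach decompositions of $n$ whenever $r(n)$ is large.

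The principal obstacle is that none of these ingredients are known unconditionally: the bound $r(n) \geqslant 1$ for all large even $n$ is precisely Goldbach's conjecture, and the quantitative Hardy--Littlewood asymptotic above is itself open. Thus the sketch above is genuinely a conditional argument (from, say, the Hardy--Littlewood quantitative conjecture or any effective lower bound $r(n) \geqslant 2$ plus a short-intervals input), and any rigorous unconditional proof would require a major breakthrough on binary additive problems with primes. For the paper's intended application it suffices to verify Conjecture \ref{2Gold} numerically in the explicit range $n = 2g+2$ needed, which is the pragmatic path I would pursue.
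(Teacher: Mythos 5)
You have correctly recognized that this statement is a conjecture, not a theorem, and that the paper offers no proof of it: the text simply states it as Conjecture \ref{2Gold} and, for the applications, cites the numerical verification of Anni--Dokchitser for all $g \leqslant 10^7$ (with a short list of exceptions handled separately by Lemma \ref{smallIGP}). So your overall assessment --- unconditional proof is out of reach, numerical verification in the relevant range is the practical route, heuristics explain why it should hold --- matches the paper's own treatment in both substance and spirit.

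One small caveat worth flagging in your sketch of the conditional argument: having two distinct Goldbach decompositions $(q_4,q_5)$ and $(q_1,q_2)$ ordered so that $q_4 < q_1 \leqslant q_2 < q_5$ is not by itself enough, because you still need a prime $q_3$ strictly in the open interval $(q_5, n)$. If you greedily take the decomposition with the smallest smaller summand, you will typically get $q_4 = 2$ and hence $q_5 = n-2$, leaving no room for $q_3$ unless $n-1$ happens to be prime. You do acknowledge this (``all but a thin subfamily''), but the fix has to be built in explicitly: one should select the outer pair $(q_4,q_5)$ so that $n - q_5$ is large enough for the short-interval prime result to apply, which is possible when $r(n)$ is large but is an extra step, not a free consequence of $r(n)\geqslant 2$. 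This does not change your conclusion, since the whole discussion is heuristic/conditional anyway, but it is the place where the argument as written would break if taken literally.
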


\begin{theorem}
\label{surjmodp}
	Let $p\geqslant 5$ be prime and let $A/\Q$ be a principally polarised abelian variety of dimension $g$. Suppose:
	\begin{enumerate}
		\item The $G_{\Q}$-action on $A[p]$ is irreducible, primitive and contains a transvection;
		\item $\Q_p(A[p]) \cong \Q_p(\zeta_p)$;
		\item $A$ is semistable at $\ell$ for all primes $\ell \leqslant 2g+1$.
	\end{enumerate}
	Then $\Gal(\Q(A[p])/\Q)\cong \GSp_{2g}(\fp)$ and $\Q(A[p])$ is tame. 
	The same holds for $p=3$ if $A[3] \otimes_{\mathbb{F}_3} \overline{\mathbb{F}}_3$ is irreducible and primitive.
\end{theorem}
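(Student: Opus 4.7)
The plan is to split the conclusion into two essentially independent parts: (a) $\Gal(\Q(A[p])/\Q) \cong \GSp_{2g}(\fp)$, and (b) $\Q(A[p])/\Q$ is tame. The second part simply re-runs the argument of Lemma \ref{tame}; the first part is the substantive content and is where the three hypotheses on $A[p]$ really enter.

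For tameness I would proceed prime by prime, exactly as in the proof of Lemma \ref{tame}, whose inputs apply to any principally polarised abelian variety and not merely to a Jacobian. At a prime $\ell \neq p$ with $\ell > 2g+1$, the Serre--Tate criterion shows that inertia acts tamely on $A[p]$. At a prime $\ell \neq p$ with $\ell \leqslant 2g+1$, hypothesis (iii) combined with Grothendieck's description of the action of inertia on the Tate module of a semistable abelian variety gives tameness. At $\ell = p$, hypothesis (ii) identifies $\Q_p(A[p])$ with $\Q_p(\zeta_p)$, which is totally tamely ramified of degree $p-1$ over $\Q_p$. Combining the three cases yields tameness at every finite prime.

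For the Galois group, let $G$ denote the image of $G_\Q$ acting on $A[p]$. Because $A$ is principally polarised, the Weil pairing places $G$ inside $\GSp_{2g}(\fp)$ and identifies the multiplier character $G \to \fp^\times$ with the mod-$p$ cyclotomic character $\chi_p$; since $\chi_p$ is already surjective, it suffices to show $G \cap \Sp_{2g}(\fp) = \Sp_{2g}(\fp)$. Hypothesis (i) puts me in the setting of the classical classification of subgroups of $\Sp_{2g}(\fp)$ containing a transvection: such a subgroup which acts irreducibly and primitively on the underlying $\fp$-vector space must equal $\Sp_{2g}(\fp)$. I would cite this in the form used by Arias-de-Reyna--Vila, ultimately going back to Kantor's classification of transvection-generated subgroups of the finite classical groups. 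The only subtlety is $p = 3$, where the classification admits a few exceptional subgroups that fail to be absolutely irreducible or absolutely primitive; demanding that $A[3] \otimes_{\mathbb{F}_3} \overline{\mathbb{F}}_3$ be irreducible and primitive rules these out and restores the same conclusion.

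The main obstacle is pinning down a clean citation of the transvection classification theorem and verifying that hypothesis (i) matches its hypotheses exactly --- in particular, that the exceptional $p = 3$ cases are precisely those eliminated by upgrading to absolute irreducibility and primitivity. The ramification analysis and the Weil-pairing identification of the multiplier with $\chi_p$ are entirely standard.
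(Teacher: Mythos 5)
Your proposal is correct and follows essentially the same route as the paper: the paper's proof is exactly the two-step argument you describe, citing \cite[Theorem 5.3]{AD20} for the fact that hypothesis (i) (plus the Weil-pairing/multiplier-character identification and the transvection classification) forces $\Gal(\Q(A[p])/\Q)\cong \GSp_{2g}(\fp)$, and invoking Lemma \ref{tame} for tameness. You have simply unwound the [AD20] citation to sketch the underlying Kantor-style classification of transvection-containing irreducible primitive subgroups, which is indeed the content behind that reference.
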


\begin{proof}
	By \cite[Theorem 5.3]{AD20}, condition $(i)$ implies that $\Gal(\Q(A[p])/\Q)\cong \GSp_{2g}(\fp)$
	(including $p=3$).
	The claim that $\Q(A[p])$ is tame follows from Lemma \ref{tame}.
\end{proof}

Before we state an explicit version of the above theorem, we need some quick definitions.

\begin{definition}
	Let $p$ be a prime and let $f(x)= x^m+a_{m-1}x^{m-1}+ \cdots +a_0 \in \mathbb{Z}_p[x]$ be a squarefree monic polynomial. Fix an integer $t\geqslant 1$.
	\begin{enumerate}
		\item We say that $f$ is $t$-Eisenstein at $p$ if $v_p(a_i) \geqslant t$ for all $i$ and $v_p(a_0)=t$.
		\item Let $q_1,\cdots,q_k$ be rational primes. We say that $f$ is of type $t\!-\!\{q_1,\cdots,q_k\}$ 
		if it can be factored over $\mathbb{Z}_p[x]$ as 
		$$
		  f(x)=h(x)\prod\limits_{i=1}^k g_i(x-\alpha_i),
		$$ 
		for some $\alpha_i \in \mathbb{Z}_p$ such that $\alpha_i \not\equiv \alpha_j \mod{p}$ for $i \neq j$, $g_i(x)$ is $t$-Eisenstein of degree $q_i$ and the reduction mod $p$, $\overline{h}(x)$, of $h(x)$ is separable with $\overline{h(\alpha_i)}\neq 0$ for all $i$.
	\end{enumerate}
\end{definition}

\begin{theorem}
\label{tamemodp}
	Let $C/\Q:y^2=f(x)$ be a hyperelliptic curve of genus $g$ and Jacobian $J_C$. Assume $2g+2$ satisfies Conjecture \ref{Gold} and let $(q_1,q_2,q_3)$ be a Goldbach triple. Fix an odd prime $p \neq q_1,q_2,q_3$. 
	
	Choose primes $p_1,p_2,p_3>\max (2g+1,p)$ such that:
	\begin{itemize}
		\item $p_2$ is a primitive root modulo $q_1$ and modulo $q_2$; 
		\item $p_3$ is a primitive root modulo $q_3$;
		\item If $p=3$, then moreover suppose that $p_2 \equiv p_3 \equiv 1 \mod{3}$.
	\end{itemize}
	
	Suppose:
	\begin{enumerate}
		\item $f(x)$ has type $1-\{2\}$ at $p_1$;
		\item $f(x)$ has type $1-\{q_1,q_2\}$ at $p_2$;
		\item $f(x)$ has type $2-\{q_3\}$ at $p_3$;
		\item $J_C$ is semistable at all $\ell \notin \{p_2,p_3\}$;
		\item $J_C$ is totally toric at $p$;
		\item $\Q_p(J_C[p]) \cong \Q_p(\zeta_p)$.
	\end{enumerate}
	
	Then $\Gal(\Q(J_C[p])/\Q)\cong \GSp_{2g}(\fp)$ and $\Q(J_C[p])/\Q$ is tame.
\end{theorem}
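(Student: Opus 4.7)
The plan is to apply Theorem \ref{surjmodp}: hypothesis (ii) of that theorem is exactly (vi) in the current statement, and its hypothesis (iii) (semistability at every $\ell \leqslant 2g+1$) follows from (iv), since $p_2, p_3 > 2g+1$ implies every $\ell \leqslant 2g+1$ lies outside $\{p_2, p_3\}$. The tameness of $\Q(J_C[p])/\Q$ will then be inherited from Theorem \ref{surjmodp}, so the substantive task reduces to verifying hypothesis (i) of that theorem: the $G_\Q$-action on $J_C[p]$ is irreducible, primitive, and contains a transvection.

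For the transvection I would use condition (i). Type $1$-$\{2\}$ at $p_1$ forces $\bar f(x) = \bar h(x)(x-\bar\alpha_1)^2$ with $\bar h$ separable and $\bar h(\bar\alpha_1)\neq 0$, so the special fibre of $C$ at $p_1$ has precisely one node, arising from the coalescence of two Weierstrass points. Hence $J_C$ over $\Q_{p_1}$ has toric rank one, and by Picard--Lefschetz the inertia $I_{p_1}$ acts on $T_p J_C$ (valid as $p_1 \neq p$, since $p_1 > p$) via a symplectic transvection. This descends to a transvection in the image of $G_\Q$ on $J_C[p]$.

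For irreducibility and primitivity I would use conditions (ii) and (iii). At $p_2$, the two $1$-Eisenstein factors $g_1, g_2$ of prime degrees $q_1 + q_2 = 2g+2$, combined with $p_2$ being a primitive root modulo both $q_1$ and $q_2$, realise $\Gal(\Q_{p_2}(f)/\Q_{p_2})$ as the direct product of two Frobenius groups $(\Z/q_i\Z) \rtimes (\Z/q_i\Z)^{\times}$; in particular it contains an element $\sigma$ acting on the $2g+2$ Weierstrass points of $C$ with cycle type $(q_1, q_2)$. Condition (iii), with $p_3$ a primitive root modulo $q_3$, similarly yields an element $\tau$ acting as a single $q_3$-cycle and fixing the remaining $2g+2-q_3$ Weierstrass points. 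Via the mechanism underlying \cite[Thm.\ 5.3]{AD20}, the elements $\sigma$ and $\tau$ promote to elements in the image of $G_\Q$ inside $\GSp_{2g}(\fp)$ that preclude any proper invariant subspace or any block decomposition, since the three distinct prime lengths $q_1, q_2, q_3$ with $q_1+q_2 = 2g+2$ and $q_3 < 2g+2$ admit no compatible block structure on a $(2g+2)$-point set.

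The main obstacle is the last step: translating cycle structures on the Weierstrass points into the linear-algebra content of $J_C[p]$ as a symplectic $\fp$-module. For 2-torsion this bridge is immediate, as $J_C[2]$ is generated by differences of Weierstrass points, but for odd $p$ the connection is indirect. It must be made via the monodromy at the primes of bad reduction, and is particularly delicate because $J_C$ is not assumed semistable at $p_2$ or $p_3$: the inertia there is genuinely wild, and the elements $\sigma, \tau$ only appear in $\GSp_{2g}(\fp)$ after passing to a suitable N\'eron model. The distinction between Eisenstein valuations $1$ and $2$ in the type conditions of (ii) and (iii) is calibrated precisely to produce these witnesses; the extra congruence $p_2 \equiv p_3 \equiv 1 \pmod 3$ in the $p=3$ case is what secures absolute irreducibility and primitivity for the modified form of Theorem \ref{surjmodp}.
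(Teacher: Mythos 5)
Your overall framing is right: reduce to Theorem \ref{surjmodp}, note that its hypothesis (ii) is (vi) here, derive its hypothesis (iii) from (iv) since $p_2,p_3>2g+1$, and then verify irreducibility, primitivity and the existence of a transvection. Your transvection argument (toric rank one at $p_1$ giving a symplectic transvection via Picard--Lefschetz) matches the mechanism behind the lemma the paper cites for this step. However, there are two genuine errors in the middle of your argument.

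First, you attribute \emph{both} irreducibility and primitivity to conditions (ii) and (iii). In the paper's proof, (ii) and (iii) are responsible only for irreducibility of $J_C[p]$; primitivity is deduced separately from (iv) and (v) (semistability outside $\{p_2,p_3\}$ together with totally toric reduction at $p$). Your proposal never uses condition (v) at all, which should have been a warning sign: a hypothesis that plays no role in the argument typically indicates a missing step. The point of (iv) and (v) is that a tensor or imprimitive decomposition of $J_C[p]$ is incompatible with the shape of the inertia action forced by semistability at the relevant primes and by the toric reduction at $p$; the Goldbach cycle data at $p_2,p_3$ alone does not rule out imprimitivity.

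Second, your description of the ramification at $p_2$ and $p_3$ as ``genuinely wild'' is wrong. The Eisenstein factors $g_i$ have prime degree $q_i<2g+2<p_2,p_3$, so the extensions they cut out are totally \emph{tamely} ramified of degree $q_i$, and the primitive-root hypotheses then control the tame inertia eigenvalues on $J_C[p]$. The actual irreducibility argument of the cited lemma works directly with this tame inertia action on the $p$-torsion, not by promoting cycle types on Weierstrass points; as you yourself observe, the Weierstrass-point bridge is only available for $p=2$. You correctly flag this gap but then leave it unresolved, whereas the paper closes it by invoking the specific lemma in the reference whose hypotheses are (ii), (iii) and the primitive-root conditions.
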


\begin{proof}
	This is a slight reformulation of \cite[Theorem 6.2]{AD20} where we can weaken some of the hypotheses since $p$ is fixed.
	
	Suppose first that $p\geqslant 5$. Then condition $(i)$  implies the existence of a transvection \cite[Lemma 2.9]{AD20}, whereas $(ii)$ and $(iii)$ imply that $J_C[p]$ is irreducible \cite[Lemma 3.2]{AD20}. Primitivity follows from $(iv)$ and $(v)$ (cf. \cite[Remark 6.1]{AD20}); the result now follows from Theorem \ref{surjmodp}. For the case $p=3$, the same argument as \cite[Theorem 6.5]{AD20} holds.
\end{proof}

\begin{corollary}
\label{IGPcor}
	Fix a positive integer $g$ and assume $2g+2$ satisfies Conjecture \ref{Gold}. Fix an odd prime $p$. If there exists a Goldbach triple for $2g+2$ not containing $p$, then there exists a curve $C/\Q$ of genus $g$ such that $\Gal(\Q(J_C[p])/\Q) \cong \GSp_{2g}(\fp)$ and $\Q(J_C[p])$ is tame.
\end{corollary}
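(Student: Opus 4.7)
The plan is to construct a hyperelliptic curve $C\colon y^2=f(x)$ over $\Q$ satisfying all six hypotheses of Theorem \ref{tamemodp}, after which the conclusion is immediate from that theorem.

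By assumption, fix a Goldbach triple $(q_1,q_2,q_3)$ for $2g+2$ with $p\notin\{q_1,q_2,q_3\}$. By Dirichlet's theorem on primes in arithmetic progressions applied to CRT-compatible residue conditions, choose primes $p_1,p_2,p_3$ all exceeding $\max(2g+1,p)$ and distinct from $p$ such that $p_2$ is a primitive root modulo both $q_1$ and $q_2$, $p_3$ is a primitive root modulo $q_3$, and (if $p=3$) $p_2\equiv p_3\equiv 1\pmod 3$. These conditions carve out a nonempty union of residue classes modulo $3q_1q_2q_3$, so infinitely many such primes exist.

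Next I would build $f$ by patching local polynomials at the finite set $S=\{p,p_1,p_2,p_3\}\cup\{\ell\text{ prime}:\ell\leq 2g+1\}$. At $p$, apply Theorem \ref{tamepcons} to obtain an explicit monic $f_p\in\Z_p[x]$ of degree $2g+2$ defining a hyperelliptic Mumford curve whose Jacobian is semistable with totally toric reduction (giving condition (v)) and satisfies $\Q_p(J[p])\cong\Q_p(\zeta_p)$ (giving condition (vi)). At each $p_i$, directly write down a monic $f_{p_i}\in\Z_{p_i}[x]$ of degree $2g+2$ of the prescribed type: for example at $p_2$, since $q_1+q_2=2g+2$ forces the factor $h$ to have degree zero, take $f_{p_2}(x)=g_1(x-\alpha_1)g_2(x-\alpha_2)$ with $g_i$ a 1-Eisenstein polynomial at $p_2$ of degree $q_i$ and $\alpha_1\not\equiv\alpha_2\pmod{p_2}$; the constructions at $p_1$ and $p_3$ are analogous. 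At each remaining $\ell\leq 2g+1$ take any monic $f_\ell\in\Z_\ell[x]$ giving good reduction.

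Each local condition — Eisenstein factorisation type at $p_1,p_2,p_3$, semistability and totally toric reduction at $p$, good reduction at other $\ell\leq 2g+1$, and crucially the isomorphism class of $J[p]$ as a $G_{\Q_p}$-module — is preserved under sufficiently tight approximation at the relevant prime: for condition (vi) this is exactly Theorem \ref{kisin}, and the others are elementary openness statements for factorisation and Newton-polygon data. Choose the appropriate exponents $N_\ell$ for each $\ell\in S$ and apply the Chinese Remainder Theorem to the coefficients to produce a monic $f\in\Z[x]$ of degree $2g+2$ with $f\equiv f_\ell\pmod{\ell^{N_\ell}}$ for every $\ell\in S$. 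The curve $C\colon y^2=f(x)$ then satisfies hypotheses (i)--(vi) of Theorem \ref{tamemodp}, which yields $\Gal(\Q(J_C[p])/\Q)\cong\GSp_{2g}(\fp)$ with $\Q(J_C[p])/\Q$ tame.

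The main obstacle is tracking all local properties simultaneously under one global CRT approximation. Individual openness statements are straightforward for conditions (i)--(v); condition (vi) is the deepest, relying on the quantitative local constancy of Kisin's theorem to ensure that high-order $p$-adic congruence preserves $J[p]$ as a Galois module. Since each local condition is located at a distinct prime, the CRT handles them independently, and the corollary reduces to Theorem \ref{tamemodp} once the local pieces are in place.
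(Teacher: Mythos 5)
Your overall strategy -- reduce to Theorem \ref{tamemodp} and build $C$ by patching prescribed local behaviour at a finite set $S$ of primes via CRT and Kisin's theorem -- is exactly the intended route, and most of the details (the choice of $p_1,p_2,p_3$ via Dirichlet, the Eisenstein-type local models, good reduction at the small primes, and the Mumford curve of Theorem \ref{tamepcons} combined with Theorem \ref{kisin} at $p$) are correct and match how the paper expects the reader to assemble the corollary from Theorem \ref{tamemodp}, Theorem \ref{tamepcons}, and the machinery of \cite{AD20}.

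However, there is a genuine gap at hypothesis (iv) of Theorem \ref{tamemodp}. That condition requires $J_C$ to be semistable at \emph{every} prime $\ell\notin\{p_2,p_3\}$, not merely at the primes in your controlled set $S=\{p,p_1,p_2,p_3\}\cup\{\ell\le 2g+1\}$. Your CRT construction fixes the reduction type at $S$, but it says nothing about the primes $\ell>2g+1$, $\ell\notin S$, that happen to divide the discriminant of the global polynomial $f$ you produce; at such a prime, $y^2=f(x)$ may well have additive reduction, which would break condition (iv) and, with it, the primitivity input to Theorem \ref{surjmodp}. (Note this is \emph{not} automatic from Serre--Tate: that result only gives tameness at $\ell>2g+1$, not semistability, and it is semistability that the primitivity argument of \cite{AD20} consumes.) To close the gap you need an additional step ensuring no spurious bad primes appear -- for instance, observe that after fixing $f\bmod M$ with $M=\prod_{\ell\in S}\ell^{N_\ell}$ you still have a one-parameter $\Z$-family $f+M\cdot t\cdot x^k$ satisfying the same congruences, and run a squarefree-discriminant sieve (or more simply invoke the corresponding step of \cite[Theorem~6.2]{AD20}, which handles exactly this) to choose $t$ so that $\operatorname{disc}(f)$ is squarefree away from $S$, forcing semistability there. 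With that addition, the argument is complete.

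One smaller point worth tidying: when you justify the choice of $p_2,p_3$ via Dirichlet you should note explicitly that $q_1,q_2,q_3\ne 3$ in the case $p=3$ (this follows because the chosen Goldbach triple avoids $p$), so the primitive-root conditions modulo the $q_i$ and the congruence $\equiv 1\bmod 3$ are mutually compatible; otherwise the residue class you describe could be empty.
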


\begin{remark}
	If $2g+2$ satisfies Double Goldbach as well (Conjecture \ref{2Gold}), then the conclusion holds for all odd primes $p$ by applying the statement with the Goldbach triples $(q_4,q_5,q_3)$ and $(q_1,q_2,q_5)$. Double Goldbach has been numerically verified by Anni--Dokchitser (cf. \cite[Remark 6.6]{AD20}) to hold for all $g \leqslant 10^7,$ excepting $g=1,2,3,4,5,7,13$.
\end{remark}

\begin{remark}
\label{2g+1}
	Observe that if $q=2g+1$ is prime, then we do not need to use a Goldbach triple; imposing that $f(x)$ has type $1-\{q\}$ at some large prime ensures that $J_C[p]$ is an irreducible $G_{\Q}$-representation and we only then need to avoid $p=q$ for the same result.
\end{remark}

Combining the above results with those of Arias-de-Reyna--Vila for $g\leqslant 2$ 
(\cite[Theorem 1.2]{AV09},\cite[Theorem 5.3]{AV11}) and Remark \ref{2g+1}, we find that the remaining cases for odd $p$ and small genus are hence as follows:
\begin{center}
\begin{tabular}{c|c}
	Genus & primes excluded \\ \hline
	$3$ & $7$ \\
	$4$ & $5,7$ \\
	$5$ & $11$ \\
	$7$ & $5,11,13$ \\
	$13$ & $11,17$.
\end{tabular}
\end{center}

\noindent
The reason for these exceptions is that the method of Anni--Dokchitser uses a Goldbach triple to ensure that $J_C[p]$ is an irreducible $G_\Q$-module when $p$ is not in the Goldbach triple. Instead, we take a different approach to ensure that the mod $p$ representation is surjective.

\begin{lemma}
\label{smallIGP}
	Let $C/\Q:y^2=f(x)$ be a hyperelliptic curve of genus $g$. 
	Let $$\rho\colon\Gal(\Q(J_C[p])/\Q) \rightarrow \GSp_{2g}(\fp)$$ be the mod $p$ representation of $J_C$. Suppose that
	\begin{enumerate}
		\item $f$ has type $1\!-\!\{2\}$ at some prime $p_1$;
		\item For some prime $\ell \neq p$ of good reduction for $C$, the reduction mod $p$ of the characteristic polynomial of a Frobenius element at $\ell$ is irreducible with nonzero trace.
	\end{enumerate}
	Then $\rho$ is surjective.
\end{lemma}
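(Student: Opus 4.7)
The plan is to verify the three hypotheses of Theorem \ref{surjmodp} for $A=J_C$: that the image of $\rho$ (i) contains a transvection, (ii) acts irreducibly (and absolutely irreducibly when $p=3$), and (iii) acts primitively on $J_C[p]$. Once these are in place, Theorem \ref{surjmodp} yields $\Gal(\Q(J_C[p])/\Q)\cong \GSp_{2g}(\fp)$, so in particular $\rho$ is surjective.

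For the transvection I would invoke the $p_1$-adic inertia argument of \cite[Lemma 2.9]{AD20}: condition (i) says that one factor of $f$ at $p_1$ is a $1$-Eisenstein quadratic, which produces a single non-separating node on the special fibre at $p_1$ whose vanishing cycle, via the Picard--Lefschetz formula, gives a transvection in the image of the inertia subgroup $I_{p_1}\subset G_\Q$ acting on $J_C[p]$.

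For irreducibility and primitivity I would use the Frobenius at $\ell$ from condition (ii). Its characteristic polynomial on $J_C[p]$ is the reduction mod $p$ of the integral characteristic polynomial of Frobenius on $T_\ell J_C$, which by hypothesis is irreducible in $\fp[x]$. Hence $J_C[p]$ is already irreducible as an $\fp[\Frob_\ell]$-module, a fortiori as a $G_\Q$-module, and in fact absolutely irreducible, which is what handles the $p=3$ case. For primitivity, suppose for contradiction that $J_C[p]=V_1\oplus\cdots\oplus V_k$ is a nontrivial block decomposition permuted by $G_\Q$, with $k\geqslant 2$. Then $\Frob_\ell$ permutes $\{V_1,\ldots,V_k\}$, and irreducibility of its characteristic polynomial forbids any proper $\Frob_\ell$-invariant subspace, so this permutation must form a single $k$-cycle with no fixed block (otherwise one block, or the sum over one orbit, would be $\Frob_\ell$-invariant). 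In a basis adapted to the $V_i$, the matrix of $\Frob_\ell$ is then block-monomial with zero diagonal blocks, forcing $\mathrm{tr}\,\Frob_\ell \equiv 0 \pmod{p}$ and contradicting the nonzero-trace hypothesis in (ii).

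The main obstacle is the transvection step, as this is the only place where the local geometry at $p_1$ (and the specific Eisenstein quadratic factor) enters in a nontrivial way; the irreducibility and primitivity steps are then a clean trace and block-permutation argument at the good prime $\ell$.
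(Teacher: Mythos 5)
Your approach is genuinely different from the paper's, which simply cites \cite[Corollary 2.2]{AK13}: a group-theoretic result that a subgroup of $\GSp_{2g}(\fp)$ containing a transvection and an element whose characteristic polynomial is irreducible with nonzero trace must be all of $\GSp_{2g}(\fp)$. You instead unpack this by verifying irreducibility and primitivity separately and feeding them into the transvection criterion of \cite[Theorem 5.3]{AD20}, which is a perfectly reasonable reconstruction. Your block-monomial trace argument for primitivity is correct and is essentially the same mechanism that \cite{AK13} uses internally to rule out the normaliser-of-a-torus case.

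Two points need fixing. First, you cannot literally invoke Theorem~\ref{surjmodp} as stated: it carries the additional hypotheses $\Q_p(A[p])\cong\Q_p(\zeta_p)$ and semistability at small primes, neither of which is available in Lemma~\ref{smallIGP}. What you actually want is the underlying input \cite[Theorem 5.3]{AD20}, which (as the proof of Theorem~\ref{surjmodp} shows) yields the group isomorphism from condition (i) alone; the extra hypotheses there are only used for tameness, which the lemma does not claim.

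Second, and this is a genuine gap, your parenthetical ``and in fact absolutely irreducible'' does not follow from the hypotheses. An element with irreducible characteristic polynomial over $\fp$ generates $\fp[\Frob_\ell]\cong\mathbb{F}_{p^{2g}}$, and over $\overline{\fp}$ this semisimple element has $2g$ distinct one-dimensional eigenspaces; absolute irreducibility of the $G_\Q$-module would require showing the full Galois image mixes these eigenspaces, which your argument does not establish. This only affects the case $p=3$ (for $p\geqslant 5$ plain $\fp$-irreducibility and primitivity suffice in \cite[Theorem 5.3]{AD20}), and $p=3$ happens not to occur in the table where the lemma is applied, but as written the lemma allows $p=3$ and your proof does not cover it. Either restrict to $p\geqslant 5$ or replace the absolute-irreducibility claim with the direct citation of \cite[Corollary 2.2]{AK13}, as the paper does.

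Your transvection step matches the paper's (both via \cite[Lemma 2.9]{AD20} applied to the $1$-Eisenstein quadratic factor at $p_1$) and is fine.
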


\begin{proof}
	This is just a reformulation of \cite[Corollary 2.2]{AK13}, where condition $(i)$ forces the existence of a transvection (cf. proof of Theorem \ref{tamemodp}).
\end{proof}

Condition $(i)$ is easy to force at some large prime $p_1>\max (2g+1,p)$ so it just remains to exhibit curves which satisfy the second condition for each of our exceptional cases in order to give an affirmative answer to the tame inverse Galois problem in these cases as well. In the table below, we give polynomials $f$ defining hyperelliptic curves, and a prime $\ell$ such that the image of $\Frob_{\ell}$ has the properties required for condition $(ii)$.

\begin{center}
\begin{tabular}{c|c|c}
	$(g,p)$ & $f(x)$ & $\ell$ \\ \hline
	$(3,7)$ & $x^7+x^3+3x^2+x+1$ & $3$ \\
	$(4,5)$ & $x^9+x^3+x^2+x+1$ & $3$ \\
	$(4,7)$ & $x^9+2x^3+2x^2+x+1$ & $3$ \\
	$(5,11)$ & $x^{11}+x^3+3x^2+x+1$ & $3$ \\
	$(7,5)$ & $x^{15}+3x^3+x^2+3x+1$ & $3$ \\
	$(7,11)$ & $x^{15}+4x^3+x^2+5x+1$ & $5$ \\
	$(7,13)$ & $x^{15}+2x^3+2x^2+2x+1$ & $3$ \\
	$(13,11)$ & $x^{27}+x^3+2x^2+2x+1$ & $5$ \\
	$(13,17)$ & $x^{27}+x^3+2x^2+x+1$ & $5$
\end{tabular}
\end{center}

Lastly we note that we are unable to do anything in the case $p=2$ since for a hyperelliptic curve, the image of the mod $2$ representation is always contained in a subgroup isomorphic to the symmetric group $S_{2g+2}$ and hence will never be surjective for $g \geqslant 3$.

\bibliographystyle{alpha}
\bibliography{mumbib}
\end{document}